\colorlet{genial}{black} 
\colorlet{genialsol}{black}
\newtheoremstyle{genialnumbox}
{7pt}
{7pt}
{\normalfont}
{}
{\small\bf\sffamily\color{genial}}
{\;}
{0.25em}
{%
{\small\sffamily\color{genial}\thmname{#1}}%
{\nobreakspace\thmnumber{\@ifnotempty{#1}{}\@upn{#2}}}
\thmnote{{\nobreakspace\the\thm@notefont\sffamily\bfseries\color{black}\nobreakspace(#3)}} 
}
\newtheoremstyle{blacknumex}
{7pt}
{7pt}
{\normalfont}
{} 
{\small\bf\sffamily}
{\;}
{0.25em}
{%
{\small\sffamily\color{genial}\thmname{#1}}%
{\nobreakspace\thmnumber{\@ifnotempty{#1}{}\@upn{#2}}}
\thmnote{{\nobreakspace\the\thm@notefont\sffamily\bfseries\color{black}\nobreakspace(#3)}} 
}
\newtheoremstyle{blacknumbox} 
{7pt}
{7pt}
{\normalfont}
{}
{\small\bf\sffamily}
{\;}
{0.25em}
{%
{\small\sffamily\color{genial}\thmname{#1}}%
{\nobreakspace\thmnumber{\@ifnotempty{#1}{}\@upn{#2}}}
\thmnote{{\nobreakspace\the\thm@notefont\sffamily\bfseries\color{black}\nobreakspace(#3)}} 
}
\newtheoremstyle{genialnum}
{7pt}
{7pt}
{\normalfont}
{}
{\small\bf\sffamily\color{genial}}
{\;}
{0.25em}
{%
{\small\sffamily\color{genial}\thmname{#1}}%
{\nobreakspace\thmnumber{\@ifnotempty{#1}{}\@upn{#2}}}
\thmnote{{\nobreakspace\the\thm@notefont\sffamily\bfseries\color{black}\nobreakspace(#3)}} 
}
\newmdenv[skipabove=7pt,
skipbelow=7pt,
rightline=false,
leftline=false,
topline=false,
bottomline=false,
backgroundcolor=black!5,
linecolor=genial,
innerleftmargin=5pt,
innerrightmargin=5pt,
innertopmargin=7pt,
innerbottommargin=7pt,
leftmargin=0cm,
rightmargin=0cm]{tBox}
\newmdenv[skipabove=7pt,
skipbelow=7pt,
rightline=false,
leftline=false,
topline=false,
bottomline=false,
backgroundcolor=genial!10,
linecolor=genial,
innerleftmargin=5pt,
innerrightmargin=5pt,
innertopmargin=5pt,
innerbottommargin=5pt,
leftmargin=0cm,
rightmargin=0cm,
linewidth=4pt]{eBox}	
\newmdenv[skipabove=7pt,
skipbelow=7pt,
rightline=false,
leftline=true,
topline=false,
bottomline=false,
linecolor=genial!50,
innerleftmargin=5pt,
innerrightmargin=5pt,
innertopmargin=5pt,
leftmargin=0cm,
rightmargin=0cm,
linewidth=4pt,
innerbottommargin=5pt]{dBox}	
\newmdenv[skipabove=7pt,
skipbelow=7pt,
rightline=false,
leftline=false,
topline=false,
bottomline=false,
linecolor=gray,
backgroundcolor=black!5,
innerleftmargin=5pt,
innerrightmargin=5pt,
innertopmargin=7pt,
leftmargin=0cm,
rightmargin=0cm,
linewidth=4pt,
innerbottommargin=5pt]{cBox}
\newmdenv[skipabove=7pt,
skipbelow=7pt,
rightline=false,
leftline=false,
topline=false,
bottomline=false,
linecolor=gray,
backgroundcolor=black!5,
innerleftmargin=5pt,
innerrightmargin=5pt,
innertopmargin=7pt,
leftmargin=0cm,
rightmargin=0cm,
linewidth=4pt,
innerbottommargin=5pt]{pBox}
\newmdenv[skipabove=7pt,
skipbelow=7pt,
rightline=false,
leftline=false,
topline=false,
bottomline=false,
linecolor=genialsol,
innerleftmargin=5pt,
innerrightmargin=5pt,
innertopmargin=0pt,
leftmargin=0cm,
rightmargin=0cm,
linewidth=4pt,
innerbottommargin=0pt]{solBox}	
\theoremstyle{genialnumbox}
\newtheorem{thm1}{Theorem}[section]
\newtheorem{ithm1}[thm1]{$\star$ THEOREM}
\newtheorem{ques1}[thm1]{Question}
\newtheorem{conj1}[thm1]{Conjecture}
\theoremstyle{blacknumex}
\newtheorem{exer}[thm1]{Exercise}
\newtheorem{exer*}[thm1]{$\ast$ Exercise}
\theoremstyle{blacknumbox}
\newtheorem{dfn1}[thm1]{Definition}
\theoremstyle{genialnum}
\newtheorem{cor1}[thm1]{Corollary}
\newtheorem{prop1}[thm1]{Proposition}
\newtheorem{lem1}[thm1]{Lemma}
\newtheorem{exm1}[thm1]{Example}
\newenvironment{thm}{\paragraph{ } \begin{tBox}\begin{thm1}}{\end{thm1}\end{tBox}}
\newenvironment{exe*}{\paragraph{ } \begin{eBox}\begin{exer*}}{\hfill{\color{genial}
\ensuremath{\diamond\diamond\diamond}}\end{exer*}\end{eBox}}
\newenvironment{exm}{\paragraph{ } \begin{exm1}}{\hfill{\tiny%
\ensuremath{\bigtriangleup\bigtriangledown\bigtriangleup}}\end{exm1}}
\newenvironment{cor}{\paragraph{ } \begin{cBox}\begin{cor1}}{\end{cor1}\end{cBox}}	
\newenvironment{ques}{\paragraph{ } \begin{cBox}\begin{ques1}}{\end{ques1}\end{cBox}}
\newenvironment{prop}{\paragraph{ } \begin{pBox}\begin{prop1}}{\end{prop1}\end{pBox}}	
\newenvironment{lem}{\paragraph{ } \begin{pBox}\begin{lem1}}{\end{lem1}\end{pBox}}
\newenvironment{lem*}[1]{\vspace{1ex}\noindent
{\bf Lemma* (#1).} [restatement]  \hspace{0.5em} \em }{ }
\newenvironment{thm*}[1]{\begin{cBox}
\vspace{1ex}\noindent 
{\bf Theorem* (#1).} [restatement]  \hspace{0.5em} }{\end{cBox}}
\theoremstyle{genialnum}
\newtheorem*{clm*}{Claim}
\newenvironment{sol}%
{\begin{solBox}
\par \noindent 
\scriptsize
{\bf Solution to ex:{\color{blue} \arabic{exer}}.}  {\color{red} \ \  :( } \\ }%
{\hfill {\color{blue} :) $\checkmark$} \end{solBox}}
\newcommand{\ENDEXER}{
{\expandafter\comment}
{\expandafter\endcomment}
}
\newtheorem{remark}[thm1]{Remark}
\renewcommand{\@seccntformat}[1]{\llap{\textcolor{genial}{\csname the#1\endcsname}\hspace{1em}}}                    
\renewcommand{\section}{\@startsection{section}{1}{\z@}
{-4ex \@plus -1ex \@minus -.4ex}
{1ex \@plus.2ex }
{\normalfont\large\sffamily\bfseries}}
\renewcommand{\subsection}{\@startsection {subsection}{2}{\z@}
{-3ex \@plus -0.1ex \@minus -.4ex}
{0.5ex \@plus.2ex }
{\normalfont\sffamily\bfseries}}
\renewcommand{\subsubsection}{\@startsection {subsubsection}{3}{\z@}
{-2ex \@plus -0.1ex \@minus -.2ex}
{.2ex \@plus.2ex }
{\normalfont\small\sffamily\bfseries}}                        
\renewcommand\paragraph{\@startsection{paragraph}{4}{\z@}
{-2ex \@plus-.2ex \@minus .2ex}
{.1ex}
{\normalfont\small\sffamily\bfseries}}
\newcommand{\abs}[1]{\left\vert#1\right\vert}
\newcommand{\set}[1]{\left\{#1\right\}}
\newcommand{\Integer}{\mathbb{Z}}
\newcommand{\N}{\mathbb{N}}
\newcommand{\Z}{\Integer}
\newcommand{\R}{\mathbb{R}}
\newcommand{\eps}{\varepsilon}
\newcommand{\ie}{{\em i.e., }}
\newcommand{\eg}{{\em e.g., }}
\newcommand{\1}[1]{\mathbf{1}_{\set{ #1 } }}
\newcommand{\ind}[1]{\mathbf{1}_{ #1}}
\def\squareforqed{\hbox{\rlap{$\sqcap$}$\sqcup$}}
\def\qed{\ifmmode\squareforqed\else{\unskip\nobreak\hfil
\penalty50\hskip1em\null\nobreak\hfil\squareforqed
\parfillskip=0pt\finalhyphendemerits=0\endgraf}\fi}
\newcommand{\ignore}[1]{ }
\newcommand{\p}{\partial}
\newcommand{\vphi}{\varphi}
\newcommand{\AND}{\qquad \textrm{and} \qquad}
\newcommand{\define}[1]{\textbf{#1}}
\DeclareMathOperator{\supp}{supp}
\newcommand{\conv}{\mathsf{conv}}
\title[Finite Busemann boundary]{Groups with a finite Busemann boundary \\ are virtually cyclic}
\author{Corentin Bodart}
\address{CB: Mathematical Institute, University of Oxford, Oxford,  OX2 6GG, United Kingdom}
\email{corentin.bodart@maths.ox.ac.uk}
\author{Liran Ron-George}
\author{Ariel Yadin}
\address{LR, AY: Department of Mathematics, Ben-Gurion University of the Negev, Be'er Sheva, Israel}
\email{lirar@post.bgu.ac.il, yadina@bgu.ac.il}
\thanks{The first author was supported by the Swiss SNF grant P500PT-225420. Research supported by the Israel Science Foundation, grant no.\ 954/21. We would like to thank an anonymous referee for their helpful suggestions.}
\begin{document}

\begin{abstract}
This note is a continuation of the study of the relationship between the geometry of Cayley graphs and the size of its metric-functional boundary. We show that if there exists a Cayley graph with finitely many Busemann points then the underlying group is virtually cyclic. Together with previous works, this completes the full characterization of groups with finite metric-functional boundaries.
The main new notion introduced is that of {\em annihilators}. \hfill (MSC 2020: 20F65)
\end{abstract}

\maketitle

\section{Introduction}

Let $(X,d)$ be a metric space. Fix a basepoint $x_0 \in X$ and consider the so-called 
{\em Busemann functions} $b_x\colon X \to \R$ given by $b_x(y) = d(x,y) - d(x,x_0)$.  
If we identify $X$ with the set $\{ b_x \ : \ x \in X \}$, we can compactify $X$ by taking 
the closure of $\{ b_x \ : \ x \in X \}$ under the topology of pointwise convergence.
This results in a compact space denoted $\overline{ (X,d) }$.
The ``new'' part of this space, $\p (X,d) : = \overline{ (X,d) } \setminus \{ b_x \ : \ x \in X \}$ 
is called the \define{metric-functional boundary}, and its elements are called \define{metric-functionals}.
(If we were to use the topology of uniform convergence on bounded sets instead, this construction would result
in something known as the {\em horofunction boundary}
\cite{gromov1981hyperbolic}.  
Throughout this note we only consider countable discrete spaces $X$, so there is no distinction between 
horofunctions and metric-functionals.)

It has been proposed by A.\ Karlsson to use metric-functionals to study general metric spaces,
in analogy to the way that linear-functionals are used to study vector spaces.  This has been quite fruitful,
see \eg \cite{karlsson2001non, karlsson2021linear,  karlsson2021hahn, karlsson2024metric} and references therein.

Our focus is for the case where $X=G$ is a finitely generated group. In this case, the metrics of special interest are \define{left-invariant} metrics $d$, \ie metrics satisfying $d(zx,zy) = d(x,y)$ for all $x,y,z \in G$.
The canonical basepoint to choose for a group $G$ is, of course, the identity element $x_0=1$.

A group $G$ acts on $\{ f\colon G \to \R \ | \ f(1)=0 \}$ by $x.f(y) = f(x^{-1} y) - f(x^{-1})$.
For a left-invariant metric $d$ on a group $G$ one observes that $x.b_y= b_{xy}$,
which implies that $\p (G,d)$ is an invariant compact set.
This metric-functional boundary of a group has proved quite useful in studying the geometry of a group, 
especially in the context of {\em Cayley graphs}, see \eg 
\cite{BF20, BT24, develin, KB02, rieffel, RY23, RY24, TY16, Walsh, WW06} among many other works.

Thinking of metric-functionals as analogs of linear-functionals, 
a natural idea is to consider {\em annihilators} (defined below).  
As an application we completely characterize the case of a finite metric-functional boundary, 
showing that this property does not depend on the specific choice of Cayley graph, and is equivalent 
to having a finite-index cyclic subgroup.

\subsection{Notation}

All groups considered are assumed to be countable, unless explicitly stated otherwise.
The identity element in a group $G$ is denoted by $1 = 1_G$.


A metric $d$ on a group $G$ is called \define{left-invariant} if $d(zx,zy) = d(x,y)$ for all $x,y,z \in G$. For such a metric we write $|x| = |x|_d : = d(x,1)$. We will also assume that our metrics are \define{proper}, i.e., the ball $\{ y\in G: d(x,y)\le r\}$ is compact for all $x\in G$ and $r\ge 0$.


Given a finitely generated group $G$, and a finite, symmetric generating set $S$,
define the \define{Cayley graph} $\Gamma(G,S)$ as the graph whose vertex set is $G$
and edge set is $\{ \{x,xs\} \ : \ x \in G \ , \ s \in S \}$.
The graph metric on $\Gamma(G,S)$ is denoted by $d_S$. 
It is easily seen to be left-invariant. We write $|x|_S = |x|_{d_S}$.

A metric $d$ is called a \define{Cayley metric} if $d=d_S$ for some finite, symmetric generating set $S$. Groups equipped with their Cayley metrics are (coarsely) geodesic spaces.

%
%
%
%
%
%

It can be shown that if $\gamma = (\gamma_n)_n$ is an infinite geodesic in a Cayley graph $\Gamma(G,S)$ (see Section \ref{scn:geodesics} below for precise definitions),
then the limit $\gamma_\infty  : = \lim_n b_{\gamma_n} \in \overline{ (G,d_S) }$ exists and is 
an element of the metric-functional boundary $\p (G,d_S)$ (see \eg \cite[Lemma 3.2]{RY23} for a proof).  
Such a metric-functional, arising as the limit along a geodesic, is called a \define{Busemann point}.
The set of all Busemann points in $\p (G,d_S)$ is denoted by $\p_b(G,d_S)$.

For a group property $\mathcal{P}$, we say that a group $G$ is \define{virtually}-$\mathcal{P}$
if there is a finite index subgroup $[G:H] < \infty$ such that $H$ is $\mathcal{P}$.
For example, $G$ is virtually-$\Z$ if there exists a finite index subgroup $[G:H] < \infty$ such that
$H \cong \Z$ (where $\Z$ denotes the additive group of integers).

\subsection{Main results}

Our main result is the following.

\begin{thm} \label{thm:main}
Let $G$ be a finitely generated, infinite group. 
The following are equivalent.
\begin{enumerate}[topsep=-2mm, leftmargin=7mm, itemsep=2pt, label=(\arabic*)]
\item $G$ is virtually-$\Z$.
\item For any Cayley metric $d_S$ on $G$, the metric-functional boundary $\p (G,d_S)$ is finite. 
\item There exists a Cayley metric $d_S$ on $G$ such that the set of Busemann points $\p_b(G,d_S)$ is finite. 
\end{enumerate}
\end{thm}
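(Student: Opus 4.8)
The plan is to prove the three statements are equivalent by establishing the cycle, concentrating essentially all of the work on the implication ``$\p_b(G,d_S)$ finite for some Cayley metric $\Rightarrow$ $G$ virtually-$\Z$,'' which is the genuinely new assertion; the other two implications are comparatively soft. Since $\p_b(G,d_S)\subseteq\p(G,d_S)$, the first bullet trivially implies the second. For ``virtually-$\Z$ $\Rightarrow$ first bullet,'' one uses that a virtually-$\Z$ group is two-ended and that, for every generating set, its Cayley graph coarsely looks like a thickened line; analysing the finitely many geodesic directions toward each of the two ends shows that every Cayley metric has only finitely many metric-functionals (this is the content of the previous works referenced in the introduction). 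So I focus on the hard implication.

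First I would linearise the problem. As left translation by $x$ is an isometry, $x\gamma$ is again a geodesic ray whenever $\gamma$ is, so $G$ permutes $\p_b(G,d_S)$; since this set is finite, the kernel $H$ of $G\to\mathrm{Sym}(\p_b(G,d_S))$ is a finite-index subgroup fixing every Busemann point. As virtually-$\Z$ passes from a finite-index subgroup to the ambient group, it suffices to prove $H$ is virtually-$\Z$. Now the annihilators enter. If $f\in\p_b$ and $h\in H$, then $h.f=f$ unwinds (using $x.f(y)=f(x^{-1}y)-f(x^{-1})$ and that $H$ is a subgroup) to $f(hy)=f(h)+f(y)$ for all $y\in G$; taking $y\in H$ shows $f|_H\colon H\to\R$ is a homomorphism. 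Collecting these over $\p_b=\{f_1,\dots,f_k\}$ gives a homomorphism $\Phi\colon H\to\R^k$, $\Phi(h)=(f_1(h),\dots,f_k(h))$, whose kernel $A=\bigcap_i\ker(f_i|_H)$ is the \emph{annihilator} of the Busemann boundary. Because $\R^k$ is torsion-free abelian and $H$ is finitely generated (being finite-index in $G$), the image $\Phi(H)$ is free abelian, $\Phi(H)\cong\Z^{r}$, and $A\supseteq[H,H]$. The goal is now sharp: if I can show $r=1$ and $A$ is finite, then $H$ is finite-by-$\Z$, hence two-ended, hence virtually-$\Z$.

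The crux, and the step I expect to be the main obstacle, is exactly this control of the annihilator: proving both that $A$ is finite and that $r\le 1$. The useful reinterpretation is that $a\in A$ if and only if every Busemann function is invariant under left translation by $a$: from $f(ay)=f(a)+f(y)$ and $f(a)=0$ one gets $f(ay)=f(y)$ for all $y$ and all $f\in\p_b$. Thus $A$ is precisely the subgroup of $H$ under which the entire finite Busemann boundary is pointwise invariant, while $r$ measures the number of independent ``directions'' the $f_i$ detect. I would prove both bounds by the same contrapositive: from an infinite annihilator, or from two linearly independent Busemann homomorphisms, manufacture infinitely many geodesic rays with pairwise distinct Busemann limits, contradicting $|\p_b|<\infty$. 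Concretely, an infinite-order $a\in A$ forces the translates $a^n\gamma$ of a fixed ray $\gamma$ to be mutually asymptotic (same $f$, since $f$ is $a$-invariant) yet to drift along the $a$-invariant level sets of $f$; converting this tension into an explicit production of new Busemann points is the technical heart, and is where properness of the metric and local finiteness of the Cayley graph (to extract convergent subsequences of geodesics via K\"onig's lemma) are used. The case $r\ge 2$ is handled in the same spirit: two independent directions allow interpolation of a family of escaping geodesics whose Busemann limits are all distinct. Once $A$ is finite and $r=1$ are established, $H$, and therefore $G$, is virtually-$\Z$, which closes the cycle.
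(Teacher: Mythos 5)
Your reduction is sound and in fact matches the paper's skeleton exactly: pass to the finite-index kernel $K$ of the action on $\p_b(G,d_S)$ (your $H$), observe that each Busemann point restricts to a homomorphism on $K$, assemble these into $\Phi\colon K\to\Z^n$ whose kernel is the annihilator, and aim for ``annihilator finite and rank one.'' The problem is that both of those facts are deferred as ``the technical heart,'' and they \emph{are} the theorem; moreover, your sketches for them point at the wrong difficulties. For finiteness of the annihilator $A$ you propose to start from an \emph{infinite-order} element $a\in A$ and study the translates $a^n\gamma$. That case is the easy one: Proposition \ref{prop:locally finite} of the paper shows, by an elementary shortest-element argument, that the annihilator of any proper integer-valued left-invariant metric is locally finite, so $A$ contains no infinite-order elements at all. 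The genuine obstruction is that $A$ could be an infinite \emph{locally finite} (torsion) group, and the example at the end of Section \ref{scn:locally_finite} shows that for general proper integer-valued invariant metrics this really happens; hence any correct proof must use the geodesic structure of the Cayley graph together with finiteness of $\p_b$ in an essential way. The paper does this in Lemma \ref{lem:bounded values} and Lemma \ref{lem:N finite}: a long geodesic ending at a far-away annihilator element must contain a ``bend'' where every Busemann functional advances slowly; translating the bend to the identity and taking a limit geodesic yields a Busemann point that moves sublinearly along its own geodesic, a contradiction. Nothing in your picture of mutually asymptotic translates drifting along level sets engages with the torsion case, where there is no single element to generate the drift.

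The rank-one step is in the same situation, and it is not a parallel, independent task as you present it: in the paper, the rank $\geq 2$ case (Theorem \ref{thm:infinite Busemann boundary}, that virtually-$\Z^d$ groups with $d\ge2$ have infinitely many Busemann points) is proved \emph{first}, and is an input to the finiteness of the annihilator, via Lemma \ref{lem:reduced finite boundary} (any two Busemann points agree up to sign and bounded error on $K$ --- exactly what makes the bend argument quantitative). Its proof is delicate: one constructs a $1$-Lipschitz homomorphism $f$ whose equality set $\{y : f(y)=|y|_S\}$ is a single cyclic subgroup, by taking an extreme point of the convex hull of normalized simple-cycle vectors in $\Gamma(H\backslash G,S)$, and then one invokes Walsh's proposition (two geodesics with the same Busemann limit are interleaved by a third) to separate the limits of distinct cosets $y\langle x\rangle$. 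Your ``interpolation of a family of escaping geodesics whose Busemann limits are all distinct'' assumes precisely what needs proof: distinct geodesic rays need not have distinct Busemann limits --- in $\Z^2$ with the standard word metric, all rays escaping into the interior of a quadrant share the single horofunction $-\varepsilon_1 x_1-\varepsilon_2 x_2$ --- which is exactly why the extreme-point and Walsh machinery is required. So the proposal is a correct map of where the proof must go, but the two steps it leaves open constitute the entire content of the result.
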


The fact that all Cayley graphs of virtually-$\Z$ groups have only finitely many Busemann points was shown in \cite{TY16}. An alternative short proof was provided by Sam Shepperd, see \cite{RY23}. More strongly, the equivalence $(1)\Leftrightarrow(2)$, \ie every Cayley graph has a finite metric-functional boundary 
if and only if the group is virtually-$\Z$, was shown in \cite{RY23}. This note complements these previous works
proving that if there exists one Cayley graph with a finite set of Busemann points, then the group must be virtually-$\Z$
(and thus all Cayley graphs have a finite metric-functional boundary).

In order to prove Theorem \ref{thm:main}, we must first 
analyze the virtually-Abelian case.

\begin{thm} 
\label{thm:virt Abelian}
Let $d_S$ be a Cayley metric on a finitely generated, infinite, virtually Abelian group $G$. 
Then, $\p_b (G,d_S)$ is finite if and only if $G$ is virtually-$\Z$.
\end{thm}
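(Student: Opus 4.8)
The implication ``virtually-$\Z$ $\Rightarrow$ $\p_b(G,d_S)$ finite'' is already known, being the content of \cite{TY16}, so I would only prove the converse, in contrapositive form: if $G$ is virtually Abelian, infinite, and \emph{not} virtually-$\Z$, then $\p_b(G,d_S)$ is infinite. By the structure theory of finitely generated virtually Abelian groups, $G$ contains a finite-index subgroup $A \cong \Z^n$, and the standing hypotheses say exactly that $n \ge 2$. The whole argument is organized to exploit this single inequality.

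The first move is to convert finiteness of the boundary into rigidity. Suppose for contradiction that $\p_b(G,d_S)$ is finite. Since the action $x.f(y) = f(x^{-1}y) - f(x^{-1})$ satisfies $x.b_y = b_{xy}$ and is continuous, it permutes the (finite) set of Busemann points, hence factors through a finite quotient; the kernel $G_0 \le G$ then acts trivially and has finite index. Unwinding $x.h = h$ for $x \in G_0$ yields $h(zy) = h(z) + h(y)$ for all $z \in G_0$ and all $y \in G$, so in particular $h$ restricts to a genuine group homomorphism on $G_0$. Writing $A_0 := A \cap G_0 \cong \Z^n$, this shows that \emph{every} Busemann point $h \in \p_b(G,d_S)$ is linear on $A_0$, i.e. $h|_{A_0}$ is induced by a linear functional on $A_0 \otimes \R \cong \R^n$. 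It therefore suffices to produce a \emph{single} Busemann point that is not linear on $A_0$.

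This is where $n \ge 2$ and discreteness enter, through the limit shape. The restriction of $d_S$ to $A_0$ is within bounded distance of a word metric on $\Z^n$, whose stable norm $\|\cdot\|$ on $\R^n$ is a polytope $P$ (I would invoke here the classical fact that word metrics on virtually Abelian groups have polytopal limit shapes); since $n \ge 2$, $P$ has vertices with full-dimensional normal cones. The model to keep in mind is $(\Z^2,\ell^1)$: the ray $0, e_1, 2e_1, \dots$ has Busemann limit $y \mapsto -y_1 + |y_2|$, which is visibly not a homomorphism of $\Z^2$, and the parallel rays ``first climb to $ke_2$, then run along $e_1$'' realize the infinite family $h_k(y) = -y_1 + |y_2 - k| - |k|$. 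The mechanism is that a ray pointing in a \emph{vertex} direction $u$ of $P$ is metrically extremal, so transverse displacement is not absorbed and the Busemann limit records the transverse lattice coordinate through a nonlinear, piecewise-linear term. I would make this precise by selecting a vertex direction $u \in A_0$ of $P$ together with a transverse primitive $w \in A_0$, checking that the concatenations ``reach $kw$, then translate by $u$ indefinitely'' are honest geodesic rays of $(G,d_S)$, and computing that their Busemann limits are pairwise distinct and nonlinear on $\langle u, w\rangle \le A_0$. Any single one of them contradicts the linearity forced in the previous step, finishing the proof.

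The hard part will be exactly this construction. Two issues need care. First, the ``transverse recording'' must be justified rigorously: I need that sliding in the $w$-direction along a $u$-geodesic genuinely costs $d_S$-distance, which is where extremality of the vertex $u$ of $P$ is used, and I must rule out that the resulting transverse term collapses to an affine one. Second, because $G$ is only \emph{virtually} Abelian, a geodesic of $(G,d_S)$ joining two points of $A_0$ need not stay inside $A_0$; I would deal with this by using rays of $G$ whose vertices track $A_0$ up to bounded error and transporting the vertex/limit-shape analysis along the rough isometry $A_0 \hookrightarrow G$, so that the Busemann computation and its nonlinearity survive the passage between $G$ and its flat $A_0$. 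As a robustness check, if the limit shape were not polytopal then the dual ball would have infinitely many extreme points, directly producing infinitely many distinct \emph{linear} Busemann functions on $A_0$ and hence an infinite boundary anyway; so the polytopal case treated above is the essential one.
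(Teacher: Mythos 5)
Your opening reduction is correct, and it is a genuinely different framing from the paper's: from $|\p_b(G,d_S)|<\infty$ you pass to the finite-index kernel $G_0$ of the action on the Busemann points, deduce that every Busemann point restricts to a homomorphism on $G_0$, hence is linear on $A_0=A\cap G_0\cong\Z^n$, and you aim to contradict this with a single nonlinear Busemann point. (The paper never argues by contradiction here: it exhibits infinitely many distinct Busemann points directly via Lemma \ref{lem:good homomorphism}; your kernel/linearity step instead echoes Lemma \ref{lem:reduced finite boundary} and the proof of Theorem \ref{thm:main}.) The genuine gap is in the construction you defer as ``the hard part'', and the mechanisms you propose for it would fail. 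First, being a geodesic is an exact property, not a coarse one: a Burago-type bound $|x|_S=\|x\|+O(1)$ against the polyhedral stable norm cannot certify that your concatenations ``reach $kw$, then run along $u$'' are geodesics of $(G,d_S)$ --- in general they are only quasi-geodesics --- and even the pure ray $m\mapsto u^m$ is geodesic precisely when $|u|_S=\|u\|$ holds \emph{exactly}, which no $O(1)$ comparison can produce. This exactness is the crux the paper's proof is engineered to achieve: the polytope spanned by normalized simple-cycle elements has every extreme point realized by an honest group element $w$, the supporting functional gives a $1$-Lipschitz homomorphism with the exact equality $f(w)=|w|_S$, and only then does Lemma 3.4 of \cite{RY23} yield a genuine geodesic through the powers of $w$, with Proposition 2.1 of \cite{Walsh} separating the translated limits. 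Second, your plan to ``transport the vertex/limit-shape analysis along the rough isometry $A_0\hookrightarrow G$'' cannot work even in principle: Busemann limits are built from $O(1)$-sized quantities and are not invariants of rough isometries or bounded perturbations of the metric --- this sensitivity is exactly why the present paper's main theorem is nontrivial.

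There is also a quantifier mismatch to repair: if, to dodge the geodesy problem, you instead take limit points of $(b_{u^k})_k$, you obtain only metric functionals in $\p(G,d_S)$, whereas your linearity conclusion applies to Busemann points only, since finiteness was assumed only for $\p_b(G,d_S)$. The missing idea that would rescue your route is \emph{domination}. Take geodesic segments from $1$ to $u^k$ and extract a limit geodesic ray (Lemma \ref{lem:compactness of geodesics}); its limit $\gamma_\infty$ is a true Busemann point, and by Lemma \ref{lem:interval} (as in Lemma \ref{lem:Busemann dominates}) it dominates some limit point $h$ of $(b_{u^k})_k$. On $A_0$, such an $h$ lies within $O(1)$ of the horofunction of $\|\cdot\|$ in the direction $u$, which --- because $u$ points at a vertex of the unit ball and $n\ge 2$ --- satisfies $g(tw)+g(-tw)\ge c|t|$ for a suitable transverse $w$ and some $c>0$. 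A homomorphism would force $0=\gamma_\infty(tw)+\gamma_\infty(-tw)\ge g(tw)+g(-tw)-O(1)\to\infty$, your desired contradiction. With Burago's theorem plus this domination argument your strategy becomes a correct alternative proof; as written, however, the two load-bearing claims --- that the concatenated rays are geodesics, and that Busemann computations transfer across a rough isometry --- are not merely unproven but false in general.
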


The proof of Theorem \ref{thm:virt Abelian} is in Section \ref{scn:VA groups}.

In order to prove Theorem \ref{thm:main} we require the notion of {\em annihilators},
which will be introduced in the next section.
The main result is Lemma \ref{lem:N finite}, see below.
The proofs of Theorem \ref{thm:main} and of Lemma \ref{lem:N finite} are in Section \ref{scn:main results}.

\subsection{Annihilators}

Fix some left-invariant metric $d$ on a finitely generated group $G$.

In studying the structure of the set of Busemann points, a natural subgroup 
arises, as we now explain.

A group $G$ acts naturally on $\overline{ (G,d) }$ by $x.h(y) = h(x^{-1} y) - h(x^{-1})$.

A subset $L \subseteq \overline{ (G,d)} $ is called \define{invariant} if 
$x.L = \{ x.h \ : \ h \in L \} = L$ for all $x \in G$.

Note that $x.b_y = b_{xy}$, so $\{b_x:x\in G\}$ and its complement $\p (G,d)$ are invariant. Also, if $d_S$ is a Cayley metric, then for any infinite geodesic $\gamma = (\gamma_n)_n$
we have that $(x \gamma_n)_n$ is also a geodesic, so $\p_b (G,d_S)$ is an invariant set.

Given a subset $L \subseteq \p (G,d)$ we define the \define{annihilator} of $L$ to be
$$ N_L : = \bigl\{ x \in G \ : \ \forall \ h \in L \ , \ h(x) = 0 \bigr\} . $$
We write $N = N_{\p (G,d)}$, the annihilator of the whole metric-functional boundary.
Note that the smaller $L$ is, the larger $N_L$ becomes.

\begin{prop}
\label{prop:ann is sbgrp}
If $L \subseteq \p(G,d)$ is invariant, then $N_L$ is a subgroup.
\end{prop}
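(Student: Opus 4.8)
The plan is to check the three subgroup axioms directly, in each case exploiting the invariance of $L$ to move group elements in and out of the functionals via the action $(g.h)(y) = h(g^{-1}y) - h(g^{-1})$. The guiding principle is that membership $x \in N_L$ only records that every $h \in L$ vanishes at the single point $x$, so to extract information about other points I would evaluate not $h$ itself but a suitable \emph{translate} $g.h$, which still belongs to $L$ by invariance; this is precisely the mechanism that converts an annihilation statement about one element into one about another.

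First I would dispose of the identity: every $h \in \overline{(G,d)}$ satisfies $h(1) = 0$ (this holds for each $b_x$ since the basepoint is $1$, and it passes to pointwise limits), so $1 \in N_L$ and $N_L$ is nonempty.

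For closure under inverses, fix $x \in N_L$ and $h \in L$. By invariance $x.h \in L$, and since $x \in N_L$ every element of $L$ vanishes at $x$; applying this to $x.h$ gives $(x.h)(x) = 0$. Unfolding the action, $(x.h)(x) = h(x^{-1}x) - h(x^{-1}) = -h(x^{-1})$, using $h(1)=0$, so $h(x^{-1}) = 0$ for every $h \in L$, i.e.\ $x^{-1} \in N_L$. For closure under products, fix $x,y \in N_L$ and $h \in L$. Now $x^{-1}.h \in L$ by invariance, so it vanishes at $y$ because $y \in N_L$, giving $(x^{-1}.h)(y) = 0$; unfolding, $(x^{-1}.h)(y) = h(xy) - h(x) = h(xy)$, since $x \in N_L$ forces $h(x)=0$. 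Thus $h(xy) = 0$ for all $h \in L$, i.e.\ $xy \in N_L$.

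I do not expect a genuine obstacle here; the only point requiring care is the bookkeeping in the identity $(g.h)(y) = h(g^{-1}y) - h(g^{-1})$ — specifically, choosing the translate ($x.h$ for inverses, $x^{-1}.h$ for products) so that unfolding it at a point already known to be annihilated produces exactly the value $h(x^{-1})$ or $h(xy)$ one wants, together with the repeated use of $h(1)=0$.
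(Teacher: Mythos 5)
Your proof is correct and takes essentially the same route as the paper, whose entire proof is the one-liner that the claim ``is immediate from $h(x^{-1}y) = x.h(y) - h(x)$ and $h(1)=0$'': your identity/inverse/product checks are exactly the unfolding of that cocycle identity, with invariance guaranteeing that the translates $x.h$, $x^{-1}.h$ you evaluate still lie in $L$. You have simply made explicit the bookkeeping the paper leaves to the reader.
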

\begin{proof}
This follows directly from the identities $h(1)=0$, $h(xy) = h(x)+x^{-1}.h(y)$ and $h(x^{-1})=h(1)-x.h(x)=-x.h(x)$.
\end{proof}

We prove that the annihilators for two most prominent invariant subsets $L$ coincide.

\begin{thm} \label{thm:Busemann annihilator}
Let $d_S$ be a Cayley metric on a group $G$.

Then, $N_{\p_b(G,d_S)} = N$.
\end{thm}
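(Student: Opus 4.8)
The inclusion $N \subseteq N_{\p_b(G,d_S)}$ is immediate, since $\p_b(G,d_S) \subseteq \p(G,d_S)$ and passing to a smaller set of functionals can only enlarge the annihilator. The plan is therefore to establish the reverse inclusion $N_{\p_b(G,d_S)} \subseteq N$ by contraposition: I would show that whenever some metric-functional is nonzero at a point $x$, some Busemann point must be nonzero at $x$ or at $x^{-1}$.

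The engine of the argument is a gradient-descent construction attaching to each metric-functional a dominating Busemann point. Given $h \in \p(G,d_S)$, I would first record that $h$ is $1$-Lipschitz, integer-valued, and vanishes at $1$, all inherited from the Busemann functions $b_y$ of which it is a pointwise limit. Writing $h = \lim_n b_{y_n}$ with $|y_n|_S \to \infty$, the first step is to check that every vertex $v$ admits a neighbor $w$ with $h(w) = h(v)-1$: the first edge of a geodesic from $v$ to $y_n$ drops $b_{y_n}$ by exactly $1$, and since $v$ has finitely many neighbors some single $w$ serves for infinitely many $n$, so the relation survives in the limit. Iterating from $v_0 = 1$ yields a path $(v_k)_k$ with $h(v_k) = -k$; the $1$-Lipschitz property forces $d_S(v_j,v_k) = |j-k|$, so this path is a genuine geodesic ray and its limit $h' := \lim_k b_{v_k}$ is a Busemann point by the fact recalled above. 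The second step is the domination estimate $h' \ge h$: for each $z$ and large $k$ the $1$-Lipschitz bound gives $d_S(v_k,z) \ge |{-k}-h(z)| = k + h(z)$, hence $b_{v_k}(z) \ge h(z)$, and letting $k \to \infty$ gives $h'(z) \ge h(z)$ for all $z$.

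With this in hand the conclusion is quick. Assume $x \notin N$ and pick $h \in \p(G,d_S)$ with $h(x) \ne 0$. If $h(x) > 0$, the dominating $h'$ already satisfies $h'(x) \ge h(x) > 0$, so $x \notin N_{\p_b(G,d_S)}$. If $h(x) < 0$, I would pass to $g := x^{-1}.h$, which lies in $\p(G,d_S)$ by invariance and satisfies $g(x^{-1}) = h(1) - h(x) = -h(x) > 0$; the construction applied to $g$ then produces a Busemann point nonzero at $x^{-1}$, so $x^{-1} \notin N_{\p_b(G,d_S)}$. Since $N_{\p_b(G,d_S)}$ is a subgroup by Proposition~\ref{prop:ann is sbgrp}, this gives $x \notin N_{\p_b(G,d_S)}$ in either case, completing the contrapositive.

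I expect the main obstacle to be the gradient-descent lemma itself: confirming that the descent path is a bona fide geodesic ray (so that its limit genuinely lies in $\p_b(G,d_S)$) and proving the domination $h' \ge h$. The one-sidedness of that estimate — it detects only positive values of $h$ — is the other point requiring care, but it is dissolved by the symmetry trick exchanging $x$ and $x^{-1}$ through the group action.
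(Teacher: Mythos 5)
Your proof is correct, and at the level of the theorem itself it follows the same skeleton as the paper: the inclusion $N \subseteq N_{\p_b(G,d_S)}$ is noted as trivial, and the reverse inclusion is reduced, via the subgroup property (Proposition \ref{prop:ann is sbgrp}) and the sign trick replacing $h$ by $x^{-1}.h$ when $h(x)<0$, to the existence of a Busemann point dominating any given metric-functional. Where you genuinely diverge is in the proof of that domination statement, which is the paper's Lemma \ref{lem:Busemann dominates}. The paper takes geodesics $\gamma^{(n)}$ from $1$ to the approximating points $y_n$, extracts a limiting infinite geodesic by a compactness argument (Lemma \ref{lem:compactness of geodesics}), and obtains domination from the purely metric fact that $z \in [x_0,y]$ implies $b_z \geq b_y$ (Lemma \ref{lem:interval}). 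You instead run a discrete gradient descent on $h$ itself: every vertex has a neighbor where $h$ drops by exactly $1$ (pigeonholing the first edges of geodesics toward the $y_n$), the resulting path is forced to be a geodesic ray by the $1$-Lipschitz bound, and domination follows from $d_S(v_k,z) \geq |h(v_k)-h(z)| = k + h(z)$ for large $k$. Both arguments are sound; yours is more self-contained (it needs neither auxiliary lemma) and produces a ray adapted to $h$ rather than to the sequence $(y_n)$, but it leans harder on the Cayley-graph structure (local finiteness and integer values, needed for the unit-speed descent and the pigeonhole step), whereas the paper's interval lemma is valid in arbitrary metric spaces and its compactness lemma is machinery the paper reuses anyway in the proof of Lemma \ref{lem:N finite}. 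One small point to make explicit in your write-up: the reduction to a sequence with $|y_n|_S \to \infty$ uses that $h$ is not of the form $b_y$, via properness of the metric (as in the paper's citation of \cite[Proposition 2.2]{RY23}).
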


The proof of Theorem \ref{thm:Busemann annihilator} is at the end of Section \ref{scn:geodesics}. This is non-trivial since $\partial_b(G,d_S)$ is often much smaller than $\partial(G,d_S)$. Examples are the lamplighter group \cite{lamplighter} and the Heisenberg group $H_3(\Z)$ \cite[Theorem A and B]{BT24}.

The annihilator provides a key step in the proof of Theorem \ref{thm:main}, stated in the following lemma and proved in Section \ref{scn:main results}.

\begin{lem}
\label{lem:N finite}
Let $d_S$ be a Cayley metric on a group $G$. 

If the set of Busemann points $\p_b(G,d_S)$ is finite then $N$ is finite.
\end{lem}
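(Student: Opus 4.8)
The plan is to reduce everything to Busemann points, extract from the hypothesis a rigid algebraic skeleton on the one hand and a single clean geometric constraint on the other, and then play them against each other. By Theorem \ref{thm:Busemann annihilator} we may work with $N=N_{\p_b(G,d_S)}$, and by Proposition \ref{prop:ann is sbgrp} applied to the invariant set $\p_b(G,d_S)$ the set $N$ is a subgroup; recall also that by definition $N$ annihilates \emph{every} metric-functional. Two elementary remarks are recorded at the start. First, every metric-functional is $1$-Lipschitz, being a pointwise limit of the $1$-Lipschitz functions $b_x$; and its values on a Cayley graph are integers, so $h(g)\in\Z$ for all Busemann points $h$. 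Second, for $x\in N$ and $g\in G$, applying the functional $g^{-1}.h\in\p_b(G,d_S)$ (invariance) to $x$ gives $0=(g^{-1}.h)(x)=h(gx)-h(g)$, so every Busemann point is constant on right cosets of $N$.

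Next I would build the algebraic skeleton. Since $\p_b(G,d_S)$ is finite and invariant, $G$ acts on it through a finite symmetric group; let $G_0$ be the finite-index kernel. For $g\in G_0$ the identity $g.h=h$ unwinds, using $h(g^{-1})=0$ nowhere but the cocycle relation, to show that $\psi_h(g):=h(g^{-1})$ is a homomorphism $G_0\to\Z$. Setting $M:=N\cap G_0$, one checks $M=\bigcap_{h\in\p_b(G,d_S)}\ker\psi_h$, so the combined map $\Psi=(\psi_h)_h$ embeds $G_0/M$ as a subgroup of $\Z^{\p_b(G,d_S)}$, whence $G_0/M\cong\Z^r$ for some finite $r$. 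A short computation with the left-$M$-invariance $h(uy)=h(y)$ (valid for $u\in M$ since $u\in G_0$) shows $h(gug^{-1})=0$ for all $g\in G$, so $M$ is normal in $G$; and $M$ has finite index in $N$. It therefore suffices to prove that $M$ is finite.

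The geometric heart is the observation, proved next, that no nontrivial element of $N$ lies on any infinite geodesic ray from $1$: if $\gamma$ is such a ray with $\gamma_R=x$, then its Busemann limit $h=\lim_n b_{\gamma_n}$ satisfies $h(x)=h(\gamma_R)=-R$, contradicting $h(x)=0$ unless $R=0$. Left-translating the tail of a ray by an element of $N$ upgrades this to: every infinite geodesic ray from $1$ meets each right coset of $N$ at most once; in particular $N$ has infinite index. Combined with the fact that each $h\in\p_b(G,d_S)$ is $1$-Lipschitz and vanishes on $M$, one also gets that no infinite ray from $1$ can stay within bounded distance of $M$, since otherwise $-n=h(\gamma_n)$ would remain bounded.

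The main obstacle is to convert ``$M$ avoids every ray from $1$'' into ``$M$ is finite'', and this is exactly where finiteness of $\p_b(G,d_S)$ must enter in an essential way. The caution is that the constraints above hold for the lamplighter group, where $M$ is infinite yet the Busemann boundary is infinite; so the finish cannot use the lever alone and must genuinely exploit finiteness of the boundary. The route I would pursue is to assume $M$ infinite and manufacture infinitely many distinct Busemann points. Fixing a ray $\gamma$ with limit $h$ and using that each $u\in M$ produces another ray $u\gamma$ with the same limit, I would track the ``$M$-component'' carried by geodesics heading toward $h$ and argue that distinct cosets of an infinite $M$ force distinct asymptotic data, hence distinct elements of $\p_b(G,d_S)$, a contradiction. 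Equivalently, the goal is to show the cosets of $N$ have uniformly bounded diameter, which by properness forces $N$ finite. The difficulty to be overcome is that interior vertices of geodesics between elements of $M$ need not lie in $M$ and that $M$ may be distorted in $G$, so the argument must use finiteness of $\p_b(G,d_S)$ to control how such geodesics turn; a clean alternative formulation of this crux is that a group with finite Busemann boundary has at most two ends, the two-ended case being handled directly and the one-ended case being precisely where an infinite $M$ would yield infinitely many Busemann points.
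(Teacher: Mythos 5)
There is a genuine gap: your proposal never actually proves the lemma. Everything up to the ``main obstacle'' paragraph is correct but preparatory --- the reduction to $N_{\p_b(G,d_S)}$ via Theorem \ref{thm:Busemann annihilator}, the homomorphisms $\psi_h$ on the kernel $G_0$ of the action (so that $G_0/(N\cap G_0)$ embeds in $\Z^{\p_b(G,d_S)}$), the observation that Busemann points are constant on cosets of $N$, and the fact that no nontrivial element of $N$ lies on a geodesic ray from $1$ (since $\gamma_\infty(\gamma_R)=-R$). None of these uses finiteness of $\p_b(G,d_S)$ in the way the conclusion requires, and you say so yourself: the passage from ``$N$ avoids every ray from $1$'' to ``$N$ is finite'' is exactly the content of the lemma, and at that point the proposal switches from proof to plan (``the route I would pursue\ldots'', ``the goal is to show\ldots''). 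The closing reformulation in terms of ends is likewise unproven and essentially restates the problem: that a group with finite Busemann boundary has at most two ends is not easier than the lemma, and saying the one-ended case ``is precisely where an infinite $M$ would yield infinitely many Busemann points'' is circular.

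For comparison, the paper fills this gap with two quantitative inputs that your sketch does not supply. First, Lemma \ref{lem:reduced finite boundary}: when $\p_b(G,d_S)$ is finite, any two Busemann points $g,h$ satisfy $g|_K\equiv q\cdot h|_K$ with $q\in\{\pm1\}$, hence $|g(x)-q\,h(x)|\le 2[G:K]$ for all $x\in G$; its proof is not formal --- it needs the virtually abelian case (Theorem \ref{thm:infinite Busemann boundary}) together with the quotient result from \cite{BT24} to rule out rank $\ge 2$. Second, Lemma \ref{lem:bounded values}: if $N$ contains elements of arbitrarily large norm, then along a geodesic from $1$ to such an element the function $\gamma_\infty$ (which must return to $0$) has a ``bend'' where it moves slowly; translating that bend back to the identity and invoking the rigidity above produces, for every $\ell$, a geodesic of length $\ell$ on which \emph{every} Busemann point is bounded by $6[G:K]+1$ at step $m$. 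Compactness of geodesics (Lemma \ref{lem:compactness of geodesics}) then yields an infinite geodesic $\gamma$ with $|\gamma_\infty(\gamma_m)|\le 6[G:K]+1$ while $\gamma_\infty(\gamma_m)=-m$, a contradiction once $m$ is chosen large. Your own lamplighter caution correctly identifies that some such uniform, boundary-finiteness-driven estimate is indispensable; the proposal simply does not contain one.
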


Finally, we have the following observation which could be of independent interest.
\begin{prop}
\label{prop:locally finite}
Let $d$ be a proper left-invariant metric on $G$ taking integer values. Then $N$ is a locally finite group; \ie any finitely generated subgroup of $N$ is finite.
\end{prop}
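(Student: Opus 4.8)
The plan is to pin down $N$ exactly, and then to use properness to trap every finitely generated subgroup inside a single ball. Concretely, set
$$ A := \bigl\{ x \in G \ : \ |gx| = |g| \text{ for all but finitely many } g \in G \bigr\} . $$
I will show that $N = A$ and that $A$ is locally finite; since $N$ is a subgroup by Proposition~\ref{prop:ann is sbgrp} (the boundary $\p(G,d)$ being invariant), this yields the statement. If $G$ is finite the claim is trivial, so I assume $G$ infinite, whence there are elements of arbitrarily large length.

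The inclusion $A \subseteq N$ together with the finite‑ball finish are the easy, robust parts. For $A \subseteq N$: if $x \in A$ and $h = \lim_n b_{y_n} \in \p(G,d)$ with $|y_n|\to\infty$, then, the finite exceptional set being bounded, we have $|y_n^{-1}x| = |y_n^{-1}| = |y_n|$ for all large $n$, so $h(x) = \lim_n\bigl(d(y_n,x)-|y_n|\bigr) = \lim_n\bigl(|y_n^{-1}x|-|y_n|\bigr) = 0$, i.e.\ $x \in N$. For local finiteness, take a finitely generated subgroup $H = \langle x_1,\dots,x_k\rangle \le N = A$. The finitely many thresholds attached to $x_1^{\pm1},\dots,x_k^{\pm1}$ admit a common bound $R$; since right‑multiplication by any generator preserves the length of any $g$ with $|g|>R$ (and keeps it $>R$), an induction on word length gives $|gh| = |g|$ for every $h \in H$ and every $g$ with $|g|>R$. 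Choosing one such $g_0$ (possible as $G$ is infinite), we get $g_0 H \subseteq \{\,y : |y| = |g_0|\,\}$, a finite set by properness; hence $H$ is finite.

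The heart of the matter is the reverse inclusion $N \subseteq A$. Suppose $x \in N \setminus A$; then $d(y,x) \ne |y|$ for infinitely many $y$, and since balls are finite these $y$ satisfy $|y|\to\infty$. The integers $b_y(x) = d(y,x)-|y|$ lie in $[-|x|,|x|]$, so by pigeonhole some nonzero value $c$ is attained along a sequence $y_n$ with $|y_n|\to\infty$; passing to a convergent subsequence $b_{y_n}\to h \in \overline{(G,d)}$ gives $h(x) = c \ne 0$. If $h \in \p(G,d)$ this contradicts $x \in N$, so it remains to exclude $h = b_z$ for some $z \in G$; equivalently, one must show that $\{b_y : y \in G\}$ is open in $\overline{(G,d)}$, so that every limit of $b_{y_n}$ with $|y_n|\to\infty$ is a genuine metric‑functional. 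This is where the integer‑valued hypothesis is essential: a basic neighbourhood of $b_z$ in the topology of pointwise convergence has the form $\{\,h : h|_F = b_z|_F\,\}$ for a finite $F \subseteq G$, and the goal is to choose $F$ so that $b_y|_F = b_z|_F$ forces $y=z$. I expect this isolation step to be the main obstacle: for a general, non‑geodesic proper metric one cannot simply exhibit a point of $F$ lying ``between'' $z$ and a far‑away $y$, so the argument must instead exploit the finiteness of spheres together with the additive degeneration $d(y_n,w) = d(y_n,z) + d(z,w)$ that convergence $b_{y_n}\to b_z$ forces on every fixed $w$, and use it to contradict $|y_n|\to\infty$.
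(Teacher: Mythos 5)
Your skeleton is the same as the paper's. Your set $A$ is exactly the characterization that the paper isolates as Lemma \ref{lem:indistinguishable} (by symmetry and left-invariance, ``$\abs{gx}=\abs{g}$ for all but finitely many $g$'' is the same as ``$d(x,y)=d(1,y)$ for all but finitely many $y$''); your inclusion $A\subseteq N$ is the paper's easy direction; and your finishing argument (translate $H$ by a single far-away element $g_0$, so that $g_0H$ lies on a sphere, finite by properness) is a correct, mildly repackaged version of the paper's argument, which instead takes $y\in\langle U\rangle$ of minimal $U$-word length outside the ball of radius $\max_u r(u)$ and derives a contradiction. Those parts are fine. The genuine gap is that the inclusion $N\subseteq A$ --- which you yourself call the heart of the matter --- is never proved: you do the pigeonhole on the integer values in $[-\abs{x},\abs{x}]$ and the compactness extraction, reduce everything to the claim that a pointwise limit of $b_{y_n}$ with $\abs{y_n}\to\infty$ cannot equal $b_z$ for $z\in G$, and then leave that claim as an announced ``obstacle''. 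Note that without $N\subseteq A$ your trapping argument cannot even start, since thresholds exist only for elements of $A$. This missing inclusion is precisely the direction (1)$\Rightarrow$(3) of the paper's Lemma \ref{lem:indistinguishable}, which is what the paper's proof of the proposition runs on.

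Be aware, moreover, that the route you sketch for closing the gap cannot work as stated: for proper, integer-valued, left-invariant metrics that are not geodesic, it is \emph{false} that every pointwise limit of $b_{y_n}$ with $\abs{y_n}\to\infty$ lies in $\p(G,d)$. For example, on $G=\Z$ let $d(x,y)=f(y-x)$, where $f(m)$ is the least total cost of writing $m=\sum_i a_i$ with each $a_i\in\{\pm1\}\cup\{\pm4^k : k\ge 1\}$, at cost $1$ for $\pm1$ and $2^k$ for $\pm4^k$. This $f$ is symmetric, subadditive, integer-valued and proper (a representation whose largest scale is $4^M$ costs at least $\max\bigl(2^M,\abs{m}2^{-M}\bigr)\ge\sqrt{\abs{m}}$), and a short cost analysis gives $f(4^n)=2^n$ together with $f(4^n-u)=2^n+f(u)$ for each fixed $u$ and all large $n$; hence
$$ b_{4^n}(u)=f(4^n-u)-f(4^n)=f(u)=b_0(u) \qquad \text{for all } n \text{ large (depending on } u\text{),} $$
so $b_{4^n}\to b_0$ pointwise even though $\abs{4^n}_d=2^n\to\infty$. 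So no openness/isolation statement holds for arbitrary sequences escaping to infinity, and any correct proof of $N\subseteq A$ must exploit the specific structure of the exceptional set of an element of $N$ (or geodesicity, as for Cayley metrics, where your ``point in between'' idea does work: the first step of a geodesic from $z$ towards $y_n$ ranges over a finite sphere, and pigeonhole finishes). Your instinct about where the difficulty sits is therefore correct --- the paper's own proof of Lemma \ref{lem:indistinguishable} performs the same extraction and asserts that the limit lies in $\p(G,d)$, leaning on \cite[Proposition 2.2]{RY23} --- but supplying that refined statement is exactly the idea your write-up is missing, so the proposal does not yet prove the proposition.
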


The proof of Proposition \ref{prop:locally finite} is in Section \ref{scn:locally_finite}.

Given a group $H$ we write $L(H) \in \{1,2,\ldots, \infty\}$ to be the supremum over the sizes of a locally finite subgroup of $H$.

\begin{lem}
Let $d$ be a proper, left-invariant, integer-valued metric on a group $G$.
Let $N = N_{\p(G,d)}$ be the annihilator subgroup.
Then,
$$ |N| \leq \inf_{H \leq G } [G:H] \cdot L(H) . $$ 
\end{lem}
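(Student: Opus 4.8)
The plan is to prove the single inequality $|N| \le [G:H] \cdot L(H)$ for every subgroup $H \le G$ and then take the infimum over $H$. Since $\p(G,d)$ is an invariant subset, Proposition \ref{prop:ann is sbgrp} guarantees that $N = N_{\p(G,d)}$ is genuinely a subgroup, and Proposition \ref{prop:locally finite} tells us it is locally finite. If $[G:H]\cdot L(H) = \infty$ the bound is vacuous, so I may assume both factors are finite; this lets me treat all indices and orders below as ordinary (finite) integers rather than cardinals.

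Fix such an $H$. First I would observe that $N \cap H$ is a locally finite subgroup of $H$: every finitely generated subgroup of $N \cap H$ is a finitely generated subgroup of $N$, hence finite by Proposition \ref{prop:locally finite}. By the very definition of $L(H)$ this yields $|N \cap H| \le L(H)$. Second, I would bound the index $[N : N\cap H]$ by $[G:H]$ through the standard embedding of coset spaces: the map $n(N\cap H) \mapsto nH$ is well defined because $N\cap H \subseteq H$, and it is injective because $nH = n'H$ forces $n^{-1}n' \in H$, while $n^{-1}n' \in N$ automatically, so $n^{-1}n' \in N\cap H$. Hence $N/(N\cap H)$ injects into $G/H$ and $[N:N\cap H] \le [G:H]$.

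Combining the two estimates through Lagrange's theorem gives
$$ |N| = [N:N\cap H]\cdot |N\cap H| \le [G:H]\cdot L(H), $$
and taking the infimum over all $H \le G$ finishes the proof. I do not expect a genuine obstacle here: all the analytic content has already been absorbed into Proposition \ref{prop:locally finite}, so what remains is purely group-theoretic bookkeeping. The only place demanding a little care is ensuring that the coset bound and Lagrange's identity are applied where every quantity is finite, which is exactly why I first discard the trivial case $[G:H]\cdot L(H)=\infty$.
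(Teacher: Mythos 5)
Your proof is correct and follows essentially the same route as the paper's: both establish $[N:N\cap H]\le[G:H]$ by the same coset-injection argument, note that $N\cap H$ is a locally finite subgroup of $H$ (via Proposition \ref{prop:locally finite}) so that $|N\cap H|\le L(H)$, and multiply the two bounds. The only difference is that you explicitly dispose of the case $[G:H]\cdot L(H)=\infty$ before invoking Lagrange's theorem, a point the paper leaves implicit.
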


\begin{proof}
Let $H \leq G$ be some subgroup. Notice that $[N:H\cap N] \leq [G:H]$, since whenever $(g_i)_{i\in I}\in N$ are in distinct cosets of $N\cap H$, then they are in distinct cosets of $H$.
Now, $H \cap N$ is a locally finite subgroup of $H$, implying that 
$$ |N| \leq [G:H] \cdot |H \cap N| \leq [G:H] \cdot L(H) . $$
\end{proof}

\begin{cor} \label{cor:torsion free}
If $H \leq G$ is a subgroup such that $H \cong \Z^d$,
then for any proper, left-invariant integer-valued metric 
the annihilator satisfies $|N| \leq [G:H]$.
\end{cor}

\begin{proof}
If $H \cong \Z^d$, then $H$ is torsion-free, and specifically $L(H) = 1$.
\end{proof}

Of course, this raises the question whether $N$ itself can be infinite, or if it must be finite (equivalently, finitely generated). We give an example at the end of Section \ref{scn:locally_finite}. However, the following question remains open:

\begin{ques}
Does there exists a Cayley metric $d_S$ for which $N=N_{\p(G,d_S)}$ is infinite?
\end{ques}

We conclude this section with two examples of Cayley metrics for which $N\ne \{1\}$.
\begin{exm}
    Consider $G=\Z\times\Z/n\Z$, generated by $S=\{(\pm1,0),(\pm1,\pm1)\}$. We have 
    \[ d_S\bigl((x,y),(x',y')\bigr) = \abs{x-x'} \]
    as soon as $\abs{x-x'}\ge n-1$. It follows that $d_S\bigl((0,y),g\bigr)=d_S\bigl(1,g\bigr)$ for all but finitely many $g\in\Z\times\Z/n\Z$, hence $N= \{(0,y):y\in\Z/n\Z\}$ by Lemma \ref{lem:indistinguishable}.
\end{exm}
\begin{center}
    \includegraphics[width=0.85\linewidth]{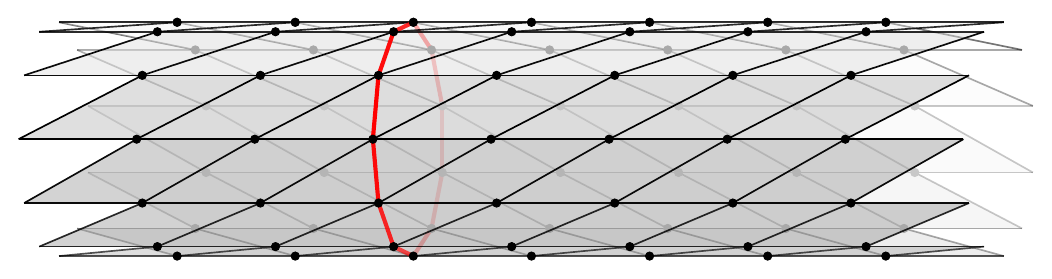}
    \captionof{figure}{The Cayley graph of $\Z\times\Z/n\Z$, with $N$ in red.}
    \label{fig:cylinder}
\end{center}

\begin{exm}
    Consider a group $G$, a finite symmetric generating set $S_1$ and a finite subgroup $F$. We take $S=FS_1F$ and prove that $N=N_{\partial(G,d_S)}\ge F$.
    
    For all $f\in F$ and $y\in G-F$, we have $d_S(f,y)=d_S(1,y)$. Indeed, if $y=s_1s_2\ldots s_\ell$ with $s_i\in S$, then $f^{-1}y=(f^{-1}s_1)s_2\ldots s_\ell$ and $f^{-1}s_1\in S$, proving that
    \[ d_S\bigl(f,y\bigr)=\abs{f^{-1}y}_S\le \abs{y}_S=d_S\bigl(1,y\bigr). \]
    The reverse inequality follows from the same argument applied to $f'=f^{-1}\in F$ and $y'=f^{-1}y\in G-F$. We conclude that $f\in N$ using Lemma \ref{lem:indistinguishable}.
\end{exm}





\section{Geodesics}

\label{scn:geodesics}

Let $d_S$ be a Cayley metric on a finitely generated group $G$.

A \define{finite geodesic} from $x$ to $y$ is a sequence $x=\gamma_0, \gamma_1 , \ldots, \gamma_\ell=y$
such that for all $0 \le i,j \le \ell$ we have $d_S(\gamma_i,\gamma_j) = \abs{i-j}$.
An \define{infinite geodesic} is a sequence $(\gamma_n)_{n=0}^\infty$ such that 
for all $ i,j\ge 0$ we have $d_S(\gamma_i,\gamma_j) = \abs{i-j}$.
The \define{length} of a geodesic $\gamma$ is $|\gamma|=\ell$ if $\gamma = (\gamma_0,\ldots, \gamma_\ell)$,
and $|\gamma|=\infty$ if $\gamma$ is an infinite geodesic.

Recall that $\gamma_\infty : = \lim_n b_{\gamma_n} \in \p_b (G,d_S)$.

For a metric space $(X,d)$ and $x,y \in X$ define the \define{segment}
between $x,y$ to be
$$ [x,y] : = \bigl\{ z \in X \ : \ d(x,y) = d(x,z) + d(z,y) \bigr\} .$$
Note that if $(\gamma_0,\ldots, \gamma_\ell)$ is a geodesic from $x$ to $y$ 
then $\{\gamma_0,\ldots,\gamma_\ell\} \subseteq [x,y]$.

The following is a simple but important observation.

\begin{lem}
\label{lem:interval}
Let $(X,d)$ be a metric space.
For any $y \in X$ and any $z \in [x_0,y]$ we have that $b_z \geq b_y$.
\end{lem}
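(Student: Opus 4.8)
The plan is to prove the inequality $b_z \geq b_y$ pointwise, i.e.\ to show that $b_z(w) \geq b_y(w)$ for every $w \in X$. Unfolding the definition of the Busemann functions $b_x(w) = d(x,w) - d(x,x_0)$, this amounts to establishing
\[ d(z,w) - d(z,x_0) \geq d(y,w) - d(y,x_0) \]
for all $w \in X$.

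First I would rewrite the term $d(z,x_0)$ using the hypothesis $z \in [x_0,y]$. By definition of the segment, this means $d(x_0,z) + d(z,y) = d(x_0,y)$, so $d(z,x_0) = d(x_0,y) - d(z,y)$. Substituting this into the displayed inequality, and using symmetry of the metric to replace $d(y,x_0)$ by $d(x_0,y)$, the two occurrences of $d(x_0,y)$ cancel and the claim reduces to
\[ d(z,w) + d(z,y) \geq d(y,w). \]

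This last inequality is precisely the triangle inequality applied to the triple $y, z, w$, valid in any metric space. Consequently there is no real obstacle here: the only substantive move is recognizing that the segment condition $z \in [x_0,y]$ is exactly what converts the pointwise comparison $b_z(w) \geq b_y(w)$ into a triangle inequality after the cancellation. I expect the whole argument to fit in two or three lines, and it uses nothing beyond the definitions and the metric axioms.
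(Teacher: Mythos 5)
Your proposal is correct and is essentially identical to the paper's proof: both substitute $d(z,x_0) = d(x_0,y) - d(z,y)$ using the segment condition and then reduce the pointwise inequality to the triangle inequality $d(z,w) + d(z,y) \geq d(y,w)$. No gaps; the argument is complete as written.
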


\begin{proof}
Compute:
\begin{align*}
b_z(x) & = d(z,x) - d(z,x_0) = d(z,x) - d(y,x_0) + d(z,y) \geq d(y,x) - d(y,x_0) = b_y(x).\; \qedhere
\end{align*}
\end{proof}

\begin{lem} \label{lem:compactness of geodesics}
Let $\gamma^{(n)} = (1=\gamma_0, \ldots,\gamma_{\ell_n})$
be a sequence of geodesics starting at $1$ and of growing length $\ell_n \to \infty$.

Then, there exists a subsequence $(n_k)_k$ such that $(\gamma^{(n_k)} )_k$
converge to an infinite geodesic $\gamma$ starting at $\gamma_0=1$, in the following sense:
For all $\ell$, there is some  $k_0(\ell)$ such that for all $k \geq k_0(\ell)$ we have 
$(\gamma_0,\ldots, \gamma_\ell) = (\gamma^{(n_k)}_0 , \ldots, \gamma^{(n_k)}_\ell)$.
\end{lem}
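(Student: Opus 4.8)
The plan is a standard diagonal (K\"onig's lemma) extraction, exploiting that a Cayley graph with finite generating set $S$ is locally finite. The key finiteness observation is that for each fixed $\ell$ the length-$\ell$ prefixes $(\gamma^{(n)}_0,\ldots,\gamma^{(n)}_\ell)$, defined whenever $\ell_n \ge \ell$, take only finitely many values: each entry $\gamma^{(n)}_j$ satisfies $\abs{\gamma^{(n)}_j}_S = j \le \ell$, so it lies in the ball $\{g : \abs{g}_S \le \ell\}$, which is finite since $S$ is finite. Hence there are at most $|\{g:\abs{g}_S\le\ell\}|^{\ell+1}$ possible prefixes.

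First I would construct, by induction on $\ell$, a decreasing chain of infinite index sets $I_0 \supseteq I_1 \supseteq \cdots$ together with vertices $\gamma_0 = 1, \gamma_1, \gamma_2, \ldots$ such that for every $n \in I_\ell$ we have $\ell_n \ge \ell$ and $(\gamma^{(n)}_0,\ldots,\gamma^{(n)}_\ell) = (\gamma_0,\ldots,\gamma_\ell)$. At the $\ell$-th step, since $\ell_n \to \infty$ and $I_{\ell-1}$ is infinite, only finitely many $n \in I_{\ell-1}$ have $\ell_n < \ell$, so infinitely many satisfy $\ell_n \ge \ell$; by the pigeonhole principle applied to the finitely many possible length-$\ell$ prefixes, infinitely many of these share a common prefix. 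That prefix necessarily extends $(\gamma_0,\ldots,\gamma_{\ell-1})$ because the relevant indices already lie in $I_{\ell-1}$, so I may define $\gamma_\ell$ as its last entry and let $I_\ell$ be the corresponding infinite subset.

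Next I would pass to the diagonal subsequence, choosing $n_k \in I_k$ strictly increasing. Then for any fixed $\ell$ and all $k \ge \ell$ we have $n_k \in I_k \subseteq I_\ell$, whence $(\gamma^{(n_k)}_0,\ldots,\gamma^{(n_k)}_\ell) = (\gamma_0,\ldots,\gamma_\ell)$; this is precisely the asserted mode of convergence, with $k_0(\ell) = \ell$.

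Finally I would verify that $\gamma = (\gamma_n)_n$ is an infinite geodesic. Fix $i,j$ and set $\ell = \max(i,j)$; for every $k \ge \ell$ the path $\gamma^{(n_k)}$ agrees with $\gamma$ up to index $\ell$ and is itself a geodesic, so $d_S(\gamma_i,\gamma_j) = d_S(\gamma^{(n_k)}_i,\gamma^{(n_k)}_j) = \abs{i-j}$, as required. The only delicate point is the bookkeeping of the nested extraction, namely ensuring each chosen prefix genuinely extends the previous one, but this is automatic from the inclusions $I_\ell \subseteq I_{\ell-1}$. I expect no serious obstacle here: the argument is purely combinatorial and rests solely on the local finiteness of the Cayley graph.
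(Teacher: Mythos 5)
Your proof is correct, but it is organized differently from the paper's. The paper encodes each finite geodesic $\gamma^{(n)}$ by the indicator function $\mathbf{1}_{F_n}$ of its vertex set $F_n = \{\gamma^{(n)}_0,\ldots,\gamma^{(n)}_{\ell_n}\}$, extracts a pointwise-convergent subsequence in one shot using sequential compactness of $\{0,1\}^G$, and then recovers the limit geodesic from the limit indicator function via the key observation that along a geodesic starting at $1$ a vertex's position equals its word length: $x \in F_n$ forces $\gamma^{(n)}_{|x|_S} = x$, so membership in the vertex set determines position. You instead run an explicit nested pigeonhole on length-$\ell$ prefixes followed by a diagonal extraction. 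The two arguments rest on the same finiteness (finiteness of balls in $\Gamma(G,S)$ for you; compactness of the product $\{0,1\}^G$ for the paper, which for countable $G$ is itself established by exactly your kind of diagonal argument), so the mathematical content is essentially equivalent. What each buys: the paper's version is shorter and delegates the combinatorics to a single appeal to compactness, but it leaves implicit both why the limit function is the indicator of a set meeting each sphere in exactly one point and why the resulting sequence is a geodesic; your version is more self-contained and makes both points explicit (the prefix stabilization via the inclusions $I_\ell \subseteq I_{\ell-1}$, and the final verification with $\ell = \max(i,j)$), at the cost of the extra bookkeeping, which you handle correctly.
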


\begin{proof}
This is just the compactness of the space $\{0,1\}^G$
with respect to pointwise convergence.

Let $F_n = \{ \gamma^{(n)}_0 , \ldots, \gamma^{(n)}_{\ell_n } \}$, and $f_n = \ind{F_n}$.
Compactness of $\{0,1\}^G$ provides a subsequence $(f_{n_k})_k$ converging pointwise to 
$f_{n_k} \to f \in \{0,1\}^G$.  
For any $x \in f^{-1}(1)$ there is some $k_0(x)$ such that 
for all $k \geq k_0(x)$ we have $f_{n_k}(x) = 1$.
Since $(\gamma^{(n)}_0 , \ldots, \gamma^{(n)}_{\ell_n })$
is a geodesic, it must be that $\gamma^{(n_k)}_{|x|} = x$ for all $k \geq k_0(x)$.
Setting $\gamma_{|x|} = x$ for all $x \in f^{-1}(1)$ provides us with the required infinite geodesic.
\end{proof}





As a consequence we have:

\begin{lem} \label{lem:Busemann dominates}
Let $d_S$ be a Cayley metric on a group $G$. For any $h \in \p (G,d_S)$, there exists a Busemann point $\gamma_\infty \in \p_b (G,d_S)$ such that $\gamma_\infty(x) \geq h(x)$ for all $x\in G$.
\end{lem}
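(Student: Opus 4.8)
The plan is to realize $h$ as a pointwise limit of Busemann functions $b_{x_n}$ along a sequence escaping to infinity, approximate each $x_n$ by a geodesic from the basepoint, extract a limiting infinite geodesic via Lemma \ref{lem:compactness of geodesics}, and show that its Busemann point dominates $h$ using Lemma \ref{lem:interval}.

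First I would write $h = \lim_n b_{x_n}$ (pointwise) for some sequence $(x_n)_n$ in $G$, which is possible since $h$ lies in the closure $\overline{(G,d_S)}$. I claim that $|x_n|_S \to \infty$ along a subsequence: if $|x_{n_j}|_S$ stayed bounded, then by properness the $x_{n_j}$ would range over a finite ball, so some value would repeat infinitely often and force $h = b_x$ for that value, contradicting $h \in \p(G,d_S)$. Passing to this subsequence, I may assume $\ell_n := |x_n|_S \to \infty$.

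Next, for each $n$ choose a finite geodesic $\gamma^{(n)} = (1 = \gamma^{(n)}_0, \ldots, \gamma^{(n)}_{\ell_n} = x_n)$ from $1$ to $x_n$. By Lemma \ref{lem:compactness of geodesics}, a subsequence of these converges to an infinite geodesic $\gamma = (\gamma_\ell)_\ell$ starting at $\gamma_0 = 1$, in the sense that for every fixed $\ell$ one has $\gamma_\ell = \gamma^{(n_k)}_\ell$ for all $k$ large. Set $\gamma_\infty := \lim_\ell b_{\gamma_\ell} \in \p_b(G,d_S)$, the associated Busemann point. It then remains to check $\gamma_\infty \geq h$ pointwise. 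Fix $\ell$. For every $k$ large enough that $\gamma_\ell = \gamma^{(n_k)}_\ell$, the vertex $\gamma_\ell$ lies on a geodesic from $1 = x_0$ to $x_{n_k}$, hence $\gamma_\ell \in [x_0, x_{n_k}]$, so Lemma \ref{lem:interval} gives $b_{\gamma_\ell} \geq b_{x_{n_k}}$. Letting $k \to \infty$ and using $b_{x_{n_k}} \to h$ pointwise yields $b_{\gamma_\ell}(x) \geq h(x)$ for every $x \in G$ and every $\ell$. Finally, taking $\ell \to \infty$ in $b_{\gamma_\ell}(x) \geq h(x)$ gives $\gamma_\infty(x) = \lim_\ell b_{\gamma_\ell}(x) \geq h(x)$, as desired.

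The only delicate point I anticipate is the bookkeeping of the two nested limits (in $k$, then in $\ell$) together with ensuring $|x_n|_S \to \infty$, so that the limiting geodesic produced by Lemma \ref{lem:compactness of geodesics} is genuinely infinite and hence $\gamma_\infty$ is a bona fide Busemann point; once these are in place, the domination is an immediate consequence of Lemma \ref{lem:interval}.
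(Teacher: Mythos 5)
Your proof is correct and follows essentially the same route as the paper's: approximate $h$ by $b_{x_n}$ with $|x_n|_S \to \infty$, take geodesics from $1$ to $x_n$, extract a limiting infinite geodesic via Lemma \ref{lem:compactness of geodesics}, and apply Lemma \ref{lem:interval} to get domination. The only cosmetic difference is your order of limits at the end (first $k \to \infty$ for fixed $\ell$, then $\ell \to \infty$), whereas the paper fixes $x$ and picks $r$ with $b_{\gamma_r}(x) = \gamma_\infty(x)$ before letting $n \to \infty$; both handle the bookkeeping validly.
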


\begin{proof}
Choose a sequence $(y_n)_n$ such that $b_{y_n}\to h$ in $\p(G,d_s)$. In particular, we have $\ell_n:= |y_n| \to \infty$. Let $\gamma^{(n)}$ be a geodesic from $1$ to $y_n$. Passing to a subsequence, we may suppose that $(\gamma^{(n)})$ converges to an infinite geodesic $\gamma$ in the sense of Lemma \ref{lem:compactness of geodesics}.

For each $x\in G$, there exists $r$ such that $b_{\gamma_r}(x) = \gamma_\infty(x)$.
Moreover, for all $n$ large enough, we have $\gamma_r=\gamma_r^{(n)}\in [1,y_n]$. By Lemma \ref{lem:interval}, we have
$$ \gamma_\infty(x) = b_{\gamma_r}(x) \geq b_{y_n}(x) \to h(x) $$
hence $\gamma_\infty(x)\ge h(x)$.
\end{proof}
We now prove Theorem \ref{thm:Busemann annihilator}, stating that $N_{\p_b (G,d_S)} = N$ for Cayley metrics $d_S$ on $G$.

\begin{proof}[Proof of  Theorem \ref{thm:Busemann annihilator}]
Let $M=N_{\p_b (G,d_S) }$.
It suffices to prove that $M \subseteq N$, since the other direction always holds.

Let $x \not\in N$.  
Since $M$ is a subgroup, it suffices to prove that either $x \not\in M$ or $x^{-1} \not\in M$.

Since $x\notin N$, there exists $h \in \p (G,d_S)$ such that $h(x) \neq 0$. Let us first suppose that $h(x)>0$. By Lemma \ref{lem:Busemann dominates}, there is a Busemann point $\gamma_\infty \in \p_b (G,d_S)$ such that $\gamma_\infty(x) \geq h(x) >0$, implying that $x \not\in M$. If instead $h(x) < 0$, then
$$x^{-1}. h(x^{-1})=-h(x)>0$$
and the same argument implies that $x^{-1}\notin M$.
\end{proof}

\section{The annihilator is locally finite} \label{scn:locally_finite}

In this section, we prove Proposition \ref{prop:locally finite}, stating that the annihilator $N$ is a locally finite group. We start with a lemma characterising when an element $x\in G$ belongs to $N$.
\begin{lem} \label{lem:indistinguishable}
    Let $d$ be a proper integer-valued metric on $G$. The following are equivalent:
    \begin{enumerate}[topsep=-2mm, leftmargin=8mm, itemsep=2pt, label=(\arabic*)]
        \item $x\in N$.
        \item $d(x,y)=d(1,y)$ for all but finitely many $y\in G$.
        \item There exists $r(x)\ge 0$ such that $d(x,y)=d(1,y)$ for all $\abs y\ge r(x)$.
    \end{enumerate}
\end{lem}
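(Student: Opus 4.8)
The plan is to treat the two easy equivalences directly and to isolate the real content in a single point. Throughout set $E_x = \{ y \in G : d(x,y) \neq d(1,y)\}$ and note that, by symmetry and left-invariance, $b_y(x) = d(x,y) - d(1,y)$, so $E_x = \{ y : b_y(x) \neq 0\}$ and $b_y(x) \in \{-|x|, \ldots, |x|\}$ is a bounded integer. Since $d$ is proper (balls are finite), a subset of $G$ is finite iff it is bounded; this gives $(2)\Leftrightarrow(3)$ at once, as (3) just says $E_x$ is contained in a ball. It remains to prove $(2)\Rightarrow(1)$ and $(1)\Rightarrow(2)$.

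For $(2)\Rightarrow(1)$: let $h \in \partial(G,d)$ and write $h = \lim_n b_{y_n}$. First I would observe that $|y_n| \to \infty$: otherwise a subsequence stays in a fixed finite ball and is eventually constant, forcing $h = b_{y^*}$ for some $y^* \in G$ and contradicting $h \in \partial(G,d)$. If $E_x$ is finite it is bounded, so $y_n \notin E_x$ for all large $n$, whence $b_{y_n}(x) = 0$ eventually and $h(x) = 0$. As $h$ was arbitrary, $x \in N$.

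For $(1)\Rightarrow(2)$ I would argue by contraposition. Suppose $E_x$ is infinite, hence unbounded, and choose $y_n \in E_x$ with $|y_n| \to \infty$. Since $b_{y_n}(x)$ takes finitely many nonzero integer values, pass to a subsequence along which $b_{y_n}(x) = c \neq 0$ is constant and (by compactness of $\overline{(G,d)}$) $b_{y_n} \to h$ pointwise; then $h(x) = c \neq 0$. If I knew that $h$ were a genuine metric-functional, i.e. $h \in \partial(G,d)$, this would give $x \notin N$ and finish. The main obstacle is exactly this: an escaping sequence could a priori converge to an interior point $b_z$, and then $h(x)=c\neq 0$ only witnesses $b_z(x)\neq 0$, which does not by itself contradict $x \in N$.

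To handle this I would use the reformulation that $x \in N$ if and only if every metric-functional $h$ is invariant under right translation by $x$, i.e. $h(gx) = h(g)$ for all $g$ (this follows since $\partial(G,d)$ is $G$-invariant and $(g.h)(x) = h(g^{-1}x) - h(g^{-1})$). Because $\{b_y\} = G \cdot b_1$ is a single orbit, failure of discreteness is homogeneous: either $\{b_y\}$ is discrete in $\overline{(G,d)}$ — in which case the limit $h$ above is automatically a metric-functional and the contrapositive closes immediately — or no $b_y$ is isolated, and then there are elements $u$ of arbitrarily large norm that are ``opposite to everything'' ($d(u,w) = |u| + |w|$ on ever larger balls). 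In this second case I would upgrade the interior detection $b_z(x) \neq 0$ to a genuine boundary detection by approximating the interior point $b_z$ by metric-functionals, as one checks happens in concrete examples. Securing this last step — guaranteeing that the functional detecting $x$ can be taken in $\partial(G,d)$ rather than on the interior — is the step I expect to be the crux, and it is where properness and the homogeneity of the orbit $G\cdot b_1$ must be combined.
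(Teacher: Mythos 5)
Your handling of $(2)\Leftrightarrow(3)$ and of $(2)\Rightarrow(1)$ is correct and coincides with the paper's proof: properness plus integrality gives the first equivalence, and for the second, any sequence with $b_{y_n}\to h\in\partial(G,d)$ must escape to infinity (else a constant subsequence forces $h=b_{y^*}$), hence eventually leaves the finite set $E_x$, so $h(x)=0$. The problem is that you never prove $(1)\Rightarrow(2)$. Your contrapositive produces an escaping sequence $(y_n)$ in $E_x$ with $b_{y_n}(x)=c\neq 0$ and a pointwise limit $h$ with $h(x)=c$, but you cannot certify $h\in\partial(G,d)$. The dichotomy you then sketch (either $\{b_y\}$ is discrete in $\overline{(G,d)}$, or, by homogeneity, no $b_y$ is isolated) is correct as a dichotomy, but its second horn is never dealt with: ``upgrade the interior detection \ldots by approximating the interior point $b_z$ by metric-functionals, as one checks happens in concrete examples'' is not an argument, and you say yourself that this is the unproven crux. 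So the proposal does not establish the lemma.

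For comparison, the paper closes exactly this step in one line: it asserts that from an escaping sequence one can extract a subsequence with $b_{y_{n_k}}\to h\in\partial(G,d)$, i.e.\ that an escaping sequence cannot have all of its limit points in the interior. You should know that your suspicion about this step is well founded. For a \emph{Cayley} metric the assertion is true: if $|y_n|_S\to\infty$ and $b_{y_n}\to b_z$ pointwise, then by integrality eventually $d(z,y_n)=|y_n|-|z|$ and $b_{y_n}(u)=b_z(u)$ for each fixed $u$; taking $u=zs_0$ where $zs_0$ is, for infinitely many $n$, the first vertex after $z$ on a geodesic from $z$ to $y_n$ (pigeonhole over the finitely many neighbours of $z$) yields $-|z|-1=1-|z|$, a contradiction. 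But for a general proper, left-invariant, integer-valued metric --- the generality in which the lemma is stated --- the blanket extraction principle is \emph{false}: on $G=\bigoplus_{i\geq 1}\mathbb{Z}/2\mathbb{Z}$ with the norm $|g|=\sum_{i\in \mathrm{supp}(g)} i$ (proper, invariant, integer-valued), the sequence of generators $e_n$ escapes, yet $b_{e_n}\to b_{1_G}$ pointwise, so no subsequence converges into the boundary. This does not contradict the lemma: there $N=\{1\}$ and each $E_x$, $x\neq 1$, is infinite, but the detecting boundary points (the functionals $h_B(u)=|u|-2\sum_{i\in\mathrm{supp}(u)\cap B} i$ for infinite $B$ containing $\mathrm{supp}(x)$) arise as limits of \emph{other} sequences in $E_x$. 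The upshot: closing your gap requires either restricting to metrics for which escaping sequences always have boundary limit points (e.g.\ Cayley metrics, via the geodesic argument above), or an argument showing that when $E_x$ is infinite \emph{some} escaping sequence in $E_x$, not an arbitrary one, admits a boundary limit point. Neither your sketch nor, strictly speaking, the paper's one-line extraction supplies this in the stated generality; the difference is that the paper treats the step as known, while you correctly identified it as the crux and then could not prove it.
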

\begin{proof}
    We prove $\neg(1)\Leftrightarrow\neg(2)$ and $(2)\Leftrightarrow (3)$. 
    
    Let $x \notin N$. There exists $h\in\p(G,d)$ such that $h(x)\ne 0$, and $y_n\in G$ such that $b_{y_n}\to h$. We have $d(x,y_n)-d(1,y_n)=b_{y_n}(x)=h(x)\ne 0$ for all $n\ge n_0$, and $\{y_n:n\ge n_0\}$ must be infinite (otherwise $h=b_y$ for some $y\in G$, see \cite[Proposition 2.2]{RY23}).
    
    Suppose that we can find a sequence $(y_n)_n$ with $|y_n| \to \infty$ and $b_{y_n}(x) \ne 0$ for all $n$. Since $d$ is integer-valued, we have $\abs{b_{y_n}(x)}\ge 1$. We can extract a subsequence $(y_{n_k})_k$ such that $b_{y_{n_k}} \to h \in \p (G,d)$. We obtain that $\abs{h(x)}\ge 1$, proving that $x \notin N$.

    Finally $(2)\Leftrightarrow(3)$ follows from $(G,d)$ being discrete and proper.
\end{proof}
\begin{proof}[Proof of Proposition \ref{prop:locally finite}]

We consider a finite subset $U\subseteq N$ such that $U=U^{-1}$, and prove that $H:=\langle U\rangle$ is finite. Let $r = \max \{ r(u) \ : \ u \in U \}$.

Consider the set $A=\{ y \in H \ : \ \abs y>r \}$.
Assume for a contradiction that $A \neq \emptyset$.
Then, we can choose $y \in A$ so that $|y|_U$ is minimal.
That is, $|z|_U \geq |y|_U$ for all $z \in A$.

Now, we can choose $z \in H$ and $u \in U$ such that $|y|_U = |z|_U+ 1$ and $y=uz$.
Because $|y| > r \geq r(u)$, we have that 
$|z| = d(u,y) = d(1,y)= |y|$, so $z \in A$, a contradiction!

We conclude that $H \subseteq \{ x \in G \ : \ |x| \le r \}$, which is a finite set.
\end{proof}
\begin{remark}
The assumption that $d$ takes values in a uniformly discrete set cannot be dropped. For instance, for $G=H_3(\Z)$ and $d$ any homogeneous metric on $H_3(\R)$, we have $N_{\partial(G,d)}=[G,G]\simeq\Z$ by \cite[Proposition 2.4]{fisher2021sub}. In contrast, since $H_3(\Z)$ is torsion-free, we have $N_{\partial(G,d)}=\{1\}$ for any word metric or any integer-valued metric $d$.
\end{remark}
Finally, we give an example of integer-valued metric for which $N$ is infinite.
\begin{prop}
    Consider the lamplighter group $G=\Z/2\Z\wr\Z$.
    \begin{enumerate}[topsep=-2mm, leftmargin=8mm, itemsep=2pt, label=(\arabic*)]
        \item For all word metric $d_S$ on $G$, the annihilator $N$ is finite.
        \item There exists a proper integer-valued metric $d$ such that $N=\bigoplus_{\Z}\Z/2\Z$ is infinite.
    \end{enumerate}
    More generally, for any finitely generated group $G$ with a locally finite subgroup $H$, there exists a proper integer-valued metric $d$ on $G$ such that $N\ge H$.
\end{prop}
\begin{proof}
Elements of $\Z/2\Z\wr\Z$ are pairs $(\Phi,k)$ where $\Phi\colon\Z\to\Z/2\Z$ is a finitely supported function, and $m\in\Z$ is an integer. Fix a finite generating set $S=S^{-1}$. We define
\begin{align*}
    a_S & = \min\left(\bigcup_{(\Phi,k)\in S}\supp\Phi\right), \quad b_S = \max\left(\bigcup_{(\Phi,k)\in S}\supp\Phi\right), \\[1mm]
    \ell & = \max\bigl\{\abs k:(\Phi,k)\in S\bigr\}.
\end{align*}
Consider an element $g=(\Psi,\ell)\in S$ and let $y=g^n$. Then $d_S\bigl((\Phi,k),y\bigr)=d_S(1,y)$ forces
\begin{itemize}[topsep=-2mm, parsep=2mm, leftmargin=5mm]
    \item $\supp(\Phi)\subset[a_S,+\infty)$ and $k\ge 0$ when $n\ge 0$, and
    \item $\supp(\Phi)\subset(-\infty,b_S]$ and $k\le 0$ when $n\le 0$.
\end{itemize}
Therefore, if $x=(\Phi,k)\in N$, then $\supp(\Phi)\subseteq[a_S,b_S]$ and $k=0$, which is a finite set.
\medbreak
We fix $G$ a finitely generated group and $H$ a locally finite subgroup. We can exhaust $H$ as an increasing union of finite subgroups $F_n\nearrow H$. For instance, we can take $G=\Z/2\Z\wr \Z$, $H=\bigoplus_{m\in\Z}\Z/2\Z$ and $F_n=\bigoplus_{\abs{m}\le n} \Z/2\Z$. We define increasingly large finite subsets that will serve as balls for the metric $d$:
\begin{itemize}[topsep=-2mm, parsep=2mm, leftmargin=5mm]
    \item $B_0=\{1_G\}$,
    \item $B_1\ni 1_G$ is a finite symmetric set such that $G=\langle B_1\cup H\rangle$, and
    \item $B_n= F_n\Bigl(\bigcup_{k=1}^{n-1} B_kB_{n-k}\Bigl)F_n$ for all $n\ge 2$.
\end{itemize}
Finally, we define $d(g,h)=n$ if $g^{-1}h\in B_n\setminus B_{n-1}$. The triangle inequality is satisfied since $B_mB_n\subseteq B_{m+n}$ by construction. For instance, for $G=\Z/2\Z\wr\Z$, if we take $B_1=\{(\mathbf 0,0),(\mathbf 0,\pm 1)\}$ and the previous exhaustion $F_n$, then
\[ \forall n\ge2, \quad B_n = \bigl\{ (\Phi,k) : \abs k\le n,\; \supp\Phi\subseteq B(0,n)\cup B(k,n)\bigr\}\]
where $B(c,n)$ is the ball of centre $c$ and radius $n$ in $\Z$.

We finally prove that $H\le N$. Fix $f\in H$, say $f\in F_n$. Then, for all $g\in G$ such that $\abs{g}\ge n$, we have $f\in F_n\subseteq F_{\abs g}$ and $g\in B_{\abs g}$ hence
\[ f^{-1}g\in F_{\abs g} B_{\abs g}=B_{\abs g}, \]
i.e., $\abs{f^{-1}g}\le \abs{g}$. The same argument gives $\abs{g}\le \abs{f^{-1}g}$ as soon as $\abs{f^{-1}g}\ge n$. Therefore, if $\abs{g}\ge n$ and $\abs{f^{-1}g}\ge n$ (which is true for all but finitely many elements $g\in G$), then $d(g,f)=d(g,1)$. It follows that $f\in N$, hence $N\ge H$.
\end{proof}
\section{Virtually abelian groups}

\label{scn:VA groups}

In this section we prove Theorem \ref{thm:virt Abelian}.
Since Cayley graphs of virtually-$\Z$ groups always have finite metric-functional boundaries (see \cite{RY23}), it suffices to prove the following. 

\begin{thm}
\label{thm:infinite Busemann boundary}
Let $G$ be a group and assume that $H$ is a finite index subgroup $[G:H] < \infty$
such that $H \cong \Z^d$ for $d \geq 2$.

Then, $|\p_b (G, d_S)| = \infty$ for any Cayley metric $d_S$ on $G$.
\end{thm}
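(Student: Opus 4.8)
The plan is to produce a single Busemann point with an infinite orbit under the $G$-action on $\p_b(G,d_S)$, which is an invariant set. First I would pass to a finite-index subgroup $A\le H$ with $A\cong\Z^d$, on which $d_S$ restricts to a proper, $A$-invariant, integer-valued metric; its stable norm $\|a\|:=\lim_n |a^n|_S/n$ is a genuine norm on $A\otimes\R\cong\R^d$, and for a Cayley metric on a virtually abelian group it is \emph{polyhedral}, with unit ball $B$ a $d$-dimensional polytope whose vertices lie in rational directions. Since $\p_b(G,d_S)$ is $G$-invariant, it suffices to exhibit $\xi\in\p_b(G,d_S)$ and $w\in A\setminus\{1\}$ for which the orbit $\{w^k.\xi:k\in\Z\}$ is infinite; this forces $|\p_b(G,d_S)|=\infty$.

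The key reduction is a coercivity criterion. Writing $f(k)=\xi(w^k)$, if $w^m.\xi=\xi$ for some $m\ne 0$ then, evaluating the defining identity $\xi(w^{-m}y)=\xi(y)+\xi(w^{-m})$ at $y=w^j$, one gets $f(j-m)-f(j)=f(-m)$ for all $j$, so $f$ is affine-plus-periodic and hence has equal asymptotic slopes as $k\to+\infty$ and as $k\to-\infty$. Therefore it is enough to find $\xi$ and $w$ with $\xi(w^k)\to+\infty$ as $k\to+\infty$ \emph{and} as $k\to-\infty$: the two asymptotic slopes then have opposite signs, ruling out any period $m$, so the orbit is infinite.

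To build such a pair I would aim a geodesic at a vertex of $B$. Let $v$ be a vertex of $B$ and $v_0\in A$ a lattice vector in the direction of $v$; applying Lemma~\ref{lem:compactness of geodesics} to geodesics from $1$ to $v_0^{\,n}$ yields an infinite geodesic $\rho$ with $\rho_n/|\rho_n|\to v/\|v\|$, and I set $\xi=\rho_\infty\in\p_b(G,d_S)$. Because $\rho$ is asymptotic to the vertex direction $v$, the transverse behaviour of $\xi$ is governed by the dual face $F_v=\partial\|\cdot\|(v)$, the set of supporting functionals of $B$ at $v$: up to a bounded error coming from the difference between $d_S$ and $\|\cdot\|$, one expects $\xi(w^k)=-\min_{\ell\in F_v}\ell(w^k)+O(1)$. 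Here $d\ge 2$ enters decisively: $F_v$ is a face of $B^\ast$ of dimension $d-1\ge 1$, so it contains two distinct supporting functionals $\ell\ne\ell'$, which (lying on a common boundary facet of $B^\ast$, hence not positively proportional) admit a vector $w$ with $\ell(w)>0>\ell'(w)$. Taking $w\in A$ in the resulting open cone makes $\min_{\ell\in F_v}\ell(w)<0<\max_{\ell\in F_v}\ell(w)$, which is exactly the two-sided coercivity $\xi(w^k)\to+\infty$ required above.

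The main obstacle is the passage from this stable-norm heuristic to the honest metric $d_S$. Concretely I must justify (i) that the stable norm of a Cayley metric on a virtually abelian group is polyhedral with rational vertices, so that a genuine vertex direction and a lattice $v_0\in A$ along it exist; and (ii) the estimate $\xi(w^k)=\lim_n\bigl(|\rho_n^{-1}w^k|_S-|\rho_n^{-1}|_S\bigr)\ge -\min_{\ell\in F_v}\ell(w^k)-C$ for a constant $C$ independent of $k$, which rests on a uniform bound $\bigl||g|_S-\|g\|\bigr|=O(1)$ together with the one-sided differentiability of a polyhedral norm along a ray converging to the vertex direction $v$. Since quasi-isometry does not preserve horofunctions, this coercivity must be proved intrinsically for $d_S$ rather than imported from the word metric of $\Z^d$; once it is in hand, the orbit argument of the second paragraph closes the proof.
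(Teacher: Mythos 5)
You are correct in outline, and your route is genuinely different from the paper's. The paper (Lemma \ref{lem:good homomorphism} together with the proof of Theorem \ref{thm:infinite Busemann boundary}) builds a rational polytope $P$ out of simple cycles in $\Gamma(H\backslash G,S)$, takes a supporting functional at an extreme point to obtain a $1$-Lipschitz homomorphism $f\colon H\to\R$ whose equality set $\{y:f(y)=|y|_S\}$ is exactly one cyclic subgroup $\langle x\rangle$, and then distinguishes the translates $y.\gamma_\infty$ across cosets of $\langle x\rangle$ via Proposition 2.1 of \cite{Walsh}; infinitude follows from $[H:\langle x\rangle]=\infty$. You instead produce a single Busemann point with infinite cyclic orbit: your periodicity reduction (if $w^m.\xi=\xi$ then $k\mapsto\xi(w^{km})$ is exactly linear, so two-sided divergence $\xi(w^k)\to+\infty$ as $k\to\pm\infty$ rules out every period) is correct, and so is your use of $d\ge2$ --- a vertex $v$ of the stable ball has dual face $F_v$ of dimension $d-1\ge1$, and two distinct $\ell,\ell'\in F_v$ are never positively proportional (both take the value $\|v\|$ at $v$), giving a lattice $w$ with $\min_{\ell\in F_v}\ell(w)<0<\max_{\ell\in F_v}\ell(w)$. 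What your approach buys: no appeal to Walsh's third-geodesic criterion, no equality-case analysis, and a clean dynamical conclusion. What it costs: your deferred items (i) and (ii) carry most of the weight. Item (i), polyhedrality with rational vertices of the stable norm of $d_S$ restricted to $H$, is true but is essentially the paper's Step I in disguise --- the cycle polytope $P$ is precisely the stable unit ball --- so that part of the difficulty is not bypassed. Item (ii) needs a Burago-type additive bound $\bigl||x|_S-\|x\|\bigr|=O(1)$ on $H$, where the restricted metric is only coarsely geodesic, so Burago's theorem must be invoked in its coarse form; moreover, a point you pass over quickly, you must show that the limit geodesic $\rho$ of Lemma \ref{lem:compactness of geodesics} stays within \emph{uniformly bounded} distance of the ray $\R_{\ge0}v$ --- this again uses that $v$ is a vertex (for $\ell_0$ in the relative interior of $F_v$ one has $\|z\|-\ell_0(z)\ge c\,\dist(z,\R_{\ge0}v)$) --- since otherwise the identification $\xi(\cdot)=-\min_{\ell\in F_v}\ell(\cdot)+O(1)$ can fail. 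All of this is provable, so your argument does close, but it ends up no shorter than the paper's: it trades the Lipschitz-homomorphism bookkeeping and Walsh's proposition for Burago's theorem plus geodesic-tracking estimates.
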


In order to prove this theorem, we will prove the following lemma (compare to Lemmas 3.4 and 3.5 in \cite{RY23}):

\begin{lem} \label{lem:good homomorphism}
Let $G$ be a group and assume that $H$ is a finite index subgroup $[G:H] < \infty$
such that $H \cong \Z^d$.

Assume that $f\colon H \to \mathbb{R}$ is a $1$-Lipschitz homomorphism (so that $f(x) \leq |x|_S$ for every $x \in H$) 
with the following properties:
There exists $x \in H$ and a positive integer $p$ such that $f(x^p)= |x^p |_S$. 
Also, for any $y \in H$ we have that if $f(y)=|y|_S$ then $y \in \langle x \rangle$.

Then the following hold:
\begin{itemize}[leftmargin=5mm, itemsep=5pt]
    \item For every $y \in H$, the sequence $(b_{y x^{pn}})_n$ converges to a Busemann point.
    \item Two sequences $(b_{y x^{pn}})_n$ and $(b_{z x^{pn}})_n$ converge to the same Busemann point only if $y^{-1} z \in \langle x \rangle$.
\end{itemize}
In particular, if $d \geq 2$ then $|\partial_b (G,d_S)|=\infty$.
\end{lem}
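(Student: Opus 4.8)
The plan is to manufacture, for each $y\in H$, a genuine Busemann point as the limit of $(b_{yx^{pn}})_n$, and then to separate these limits across distinct cosets of $\langle x\rangle$ by analysing the stabiliser of the base direction $\gamma_\infty:=\lim_n b_{x^{pn}}$.

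First I would show that the powers of $x^p$ lie on a common infinite geodesic. Since $f$ is a homomorphism with $f(x^p)=|x^p|_S$, we get $f(x^{pn})=n|x^p|_S$; combining $f(x^{pn})\le |x^{pn}|_S$ (as $f$ is $1$-Lipschitz) with the triangle inequality $|x^{pn}|_S\le n|x^p|_S$ forces $|x^{pn}|_S=n|x^p|_S$ for all $n$. Hence $d_S(1,x^{pm})+d_S(x^{pm},x^{pn})=d_S(1,x^{pn})$ for $m\le n$, so $x^{pm}\in[1,x^{pn}]$, and concatenating geodesic segments $[x^{pm},x^{p(m+1)}]$ yields an infinite geodesic passing through every $x^{pm}$. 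Its limit $\gamma_\infty=\lim_k b_{\gamma_k}$ exists and is a Busemann point, and since $x^{pn}$ occurs at parameter $n|x^p|_S$ we get $b_{x^{pn}}\to\gamma_\infty$. For general $y$, the identity $y.b_{x^{pn}}=b_{yx^{pn}}$ and the continuity of the (pointwise) action give $b_{yx^{pn}}\to y.\gamma_\infty\in\partial_b(G,d_S)$, using invariance of $\partial_b(G,d_S)$. This settles the first bullet.

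For the second bullet I would reduce equality of limits to a stabiliser statement: if $\lim_n b_{yx^{pn}}=\lim_n b_{zx^{pn}}$ then $y.\gamma_\infty=z.\gamma_\infty$, so $w.\gamma_\infty=\gamma_\infty$ for $w:=z^{-1}y\in H$, and it suffices to prove $w\in\langle x\rangle$. Unpacking $w.\gamma_\infty=\gamma_\infty$ gives the cocycle identity $\gamma_\infty(w^{-1}g)=\gamma_\infty(g)+\gamma_\infty(w^{-1})$; in particular $\gamma_\infty(w)=-\gamma_\infty(w^{-1})$. Setting $a:=\gamma_\infty(w^{-1})=\lim_n\bigl(|wx^{pn}|_S-n|x^p|_S\bigr)$, the Lipschitz bound $|wx^{pn}|_S\ge f(wx^{pn})=f(w)+n|x^p|_S$ yields $a\ge f(w)$, while the analogous bound $\gamma_\infty(w)\ge f(w^{-1})=-f(w)$ combined with $\gamma_\infty(w)=-a$ yields $a\le f(w)$; hence $a=f(w)$.

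The crux is the final step, which exploits that $d_S$ is integer-valued. The sequence $|wx^{pn}|_S-n|x^p|_S$ consists of integers converging to $a=f(w)$, so it is eventually constant: $|wx^{pn}|_S=n|x^p|_S+f(w)$ for all large $n$. Then $f(wx^{pn})=f(w)+n|x^p|_S=|wx^{pn}|_S$, so $wx^{pn}$ is a word-length maximiser of $f$; by the hypothesis $wx^{pn}\in\langle x\rangle$, whence $w=(wx^{pn})x^{-pn}\in\langle x\rangle$. I expect this to be the main obstacle, and the delicate point is that the hypothesis must be applied not to $w$ (for which $f(w)=|w|_S$ may fail) but to the shifted element $wx^{pn}$. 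Finally, when $d\ge 2$ the cyclic subgroup $\langle x\rangle$ has infinite index in $H\cong\Z^d$, so there are infinitely many cosets, each producing by the first bullet a Busemann point and by the second bullet a distinct one; therefore $|\partial_b(G,d_S)|=\infty$.
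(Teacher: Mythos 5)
Your proof is correct, and for the crucial second bullet it takes a genuinely different route from the paper. The first bullet is handled essentially the same way: the paper outsources the fact that $f(x^p)=|x^p|_S$ yields an infinite geodesic passing through all powers $x^{pn}$ to Lemma 3.4 of \cite{RY23}, which you re-prove inline by showing $|x^{pn}|_S = n|x^p|_S$ and concatenating segments; both then translate by $y$ using invariance. For uniqueness, the paper invokes Proposition 2.1 of \cite{Walsh}: two geodesics $(yw^n)_n$ and $(zw^n)_n$ with the same limit are met infinitely often by a common third geodesic, producing indices $m,k,n$ with $d_S(yw^m,zw^k)+d_S(zw^k,yw^n)=d_S(w^m,w^n)$; evaluating the Lipschitz homomorphism $f$ along this broken path forces $f(w^{-m}y^{-1}zw^k)=|w^{-m}y^{-1}zw^k|_S$, and the hypothesis on $f$ then places $y^{-1}z$ in $\langle x\rangle$. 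You instead reduce to the stabiliser equation $w.\gamma_\infty=\gamma_\infty$ for $w=z^{-1}y$, sandwich $\gamma_\infty(w^{-1})$ between $f(w)$ and $f(w)$ via the Lipschitz bound applied to both $w$ and $w^{-1}$, and then exploit that the word metric is integer-valued to upgrade the convergence $|wx^{pn}|_S - n|x^p|_S \to f(w)$ to eventual equality, so that $f(wx^{pn})=|wx^{pn}|_S$ and the hypothesis applies to the shifted element $wx^{pn}$ — correctly noting that it cannot be applied to $w$ itself. Your route is more self-contained (no appeal to Walsh's result, only to compactness-free elementary facts), at the modest cost of using discreteness of the metric's values, which holds automatically for Cayley metrics; the paper's route is geometric and does not need integrality. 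As a small bonus, your argument shows the slightly stronger statement that the full stabiliser of $\gamma_\infty$ in $H$ is contained in $\langle x\rangle$. Both proofs conclude identically from $[H:\langle x\rangle]=\infty$ when $d\ge 2$.
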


\begin{proof}
    Write $w=x^p$.
    By Lemma 3.4 in \cite{RY23}, the assumption that $f(w) = |w|_S$ implies that there exists a geodesic $\gamma$ in $\Gamma(G,S)$ such that $\gamma_{n|w|_S}=w^n$ for every $n$, as required.  So for every $y \in H$, $b_{yw^n}=y.b_{w^n}$ converges to $y.\gamma_{\infty} \in \partial_b(G,d_S)$.

    Assume that $b_{yw^n}$ and $b_{zw^n}$ converge to the same Busemann point, and let $\gamma$ be as above. Proposition 2.1 in \cite{Walsh} tells us that since the two geodesics $(yw^n)_n$ and $(z w^n)_n$ converge to the 
same limit, there exists a third geodesic that intersects each of them infinitely many times.
    Specifically, there exist $n,m,k$ such that $n \geq m$ and
    $$d_S(yw^m,zw^k)+d_S(zw^k,yw^n)=d_S(yw^m,yw^n) = d_S(w^m,w^n) . $$
    Also, since $f$ is a $1$-Lipschtiz homomorphism,
    $$f(w^{-m}y^{-1}zw^k) \leq d_S(yw^m,zw^k) \AND f(w^{-k}z^{-1} y w^n) \leq d_S(zw^k,yw^n)$$
    so that
    $$d_S(yw^m,zw^k)+d_S(zw^k,yw^n) \geq f(w^{-m}y^{-1} z w^k)+f(w^{-k} z^{-1} yw^n)=f(w^{n-m}) .$$
    Since $f(w^{n-m})=(n-m)|w|_S = d_S(w^m, w^n)$ we get equality throughout:
    $$f(w^{-m}y^{-1}zw^k) = |w^{-m}y^{-1}zw^k|_S
    \AND f(w^{-k}z^{-1} y w^n) = |w^{-k}z^{-1} y w^n|_S . $$
By our assumptions on the function $f$, 
this implies that 
$w^{-m} y^{-1} z w^k \in \langle x \rangle$ and thus $y^{-1} z \in \langle x \rangle$, as required.

Finally, since $\langle x \rangle$ is cyclic and $H \cong \mathbb{Z}^d$,  if $d \geq 2$ we have $[H:\langle x \rangle]=\infty$ and so the set $\{y.\gamma_{\infty} \ | \  y \in H\} \subset \partial_b(G,d_S)$ is infinite. 
\end{proof}

\begin{proof}[Proof of Theorem \ref{thm:infinite Busemann boundary}]
By the Lemma \ref{lem:good homomorphism}, 
it is enough to prove the existence of a $1$-Lipschitz homomorphism $f\colon H \to \mathbb{R}$ and some $x \in H$
and positive integer $p$ 
such that $f(x^p) = |x^p|_S$ and 
$f(y) = |y|_S$ implies $y \in \langle x \rangle$.

Without loss of generality, we may assume that $H$ is a normal subgroup of $G$. 
Let $\xi\colon H \to \mathbb{Z}^d$ be an isomorphism. Since $H$ is normal, for every $g \in G$ the map $x \mapsto gxg^{-1}$ is an automorphism of $H$. In fact, this defines a representation $\pi\colon G \to \mathrm{GL}_d(\mathbb{Z}) \subset \mathrm{GL}_d(\mathbb{R})$ such that for all $g \in G$ and $x \in H$:
$$\pi_g \xi(x)=\xi(gxg^{-1})$$

Consider the Cayley graph $\Gamma(H \backslash G,S)$ and let $C$ be the set of all 
$x \in H$ such that $x=s_1s_2 \cdots s_n$ and $(s_1,s_2,\ldots,s_n)$ labels a simple cycle in $\Gamma(H \backslash G,S)$. (Precisely, $s_1 \cdots s_n \in C$ if $s_1 \cdots s_n \in H$ and $H s_1 \cdots s_j \neq H s_1 \cdots s_i$ for all $i \neq j$.
If some $s \in S$ is in $H$ then it labels a simple cycle of length $1$, so $s \in C$.) 
Note that $C$ is non-empty and finite because $\Gamma(H \backslash G,S)$ is a finite connected graph.

Recall the following fact: let $F \subset \R^d$ be a finite set and let $P=\conv(F)$ be its convex hull. An extreme point of $P$ is a point $e\in P$ such that there exists a linear functional $\phi_e\colon \R^d \to \R$ such that $\phi_e(p) \leq 1$ for all $p \in P$ and $\phi_e(p)=1$ if and only if $p=e$.

Define:
$$F=\left\{\frac{\xi(gxg^{-1})}{|x|_S} \;\middle|\; g \in G ~,~ x \in C\right\}=\bigcup_{g \in G} \left\{\pi_g\left(\frac{\xi(x)}{|x|_S}\right) \;\middle|\; x \in C\right\}$$
The set $F$ is finite because $C$ is finite, $[G:H]<\infty$ and $H$ is abelian (so $gxg^{-1}$ only depends on $x$ and the coset of $g$ in $H \backslash G$). 
Note that $F$ is invariant under $\pi_g$ for all $g \in G$.

Define the polytope $P=\conv(F)$. By construction, $P$ is $\pi$-invariant, and in particular for every extreme point $e \in P$ and every $g \in G$, $\pi_g(e)$ is also an extreme of $P$.

If $x \in C$ then $\frac{\xi(x)}{|x|_S} \in P$ by definition. 

Also, if $(s_1,\ldots, s_n)$ labels a simple cycle in $\Gamma ( H \backslash G, S)$, 
then so does $(s_n^{-1}, \ldots, s_1^{-1})$. 
Therefore, $x \in C$ implies that $x^{-1} \in C$.

Note that $\frac{\xi(x^{-1})}{|x^{-1}|_S}=-\frac{\xi(x)}{|x|_S}$ and so by convexity of $P$ we get that $0 \in P$.

{\bf Step I.}
We now show, by induction on $|x|_S$, that $\frac{\xi(x)}{|x|_S} \in P$ for every $x \in H$. 

The base case is for $x \in H$ with minimal $|x|_S$; \ie $|x|_S \leq |y|_S$ for all $y \in H$.
In this case, we can write $x = s_1 \cdots s_n$ for $n=|x|_S$.
If $(s_1, \ldots, s_n)$ does not label a simple cycle, then there are some $i < j$ such that 
$H s_1 \cdots s_j = H s_1 \cdots s_i$ implying that $(s_{i+1}, \ldots, s_j)$ labels a cycle,
and specifically $s_{i+1} \cdots s_j \in H$.  But $|s_{i+1} \cdots s_j|_S = j-i < n=|x|_S$,
contradicting the minimality of $|x|_S$.  Therefore, if $|x|_S$ is minimal, then $x \in C$,
so $\frac{ \xi(x) }{ |x|_S} \in P$ by definition.

Assume that $n$ is such that there exists $y \in H$ with $|y|_S < n$ and for all $y \in H$
with $|y|_S < n$ it holds that $\frac{ \xi(y) }{ |y|_S} \in P$.
Let $x=s_1s_2 \cdots s_n \in H$ be such that $|x|_S=n$. 
Since $x \in H$, the sequence $(s_1,s_2,...,s_n)$ labels a cycle in $\Gamma(H \backslash G,S)$. 
If it is a simple cycle, $\frac{\xi(x)}{|x|_S} \in P$ and we are done. 
If not, as before, there exist $1 \leq i<j \leq n$ such that $(s_{i+1},\ldots,s_j)$ labels a simple cycle. 
Denote $g=s_1 \cdots s_i$, $y=s_{i+1} \cdots s_j$ and $z=gs_{j+1} \cdots s_n$, so that $x=gyg^{-1}z$ and $y,z \in H$. 
Since $|y|_S \leq j-i$ and $|z|_S \leq i+n-j$, we get $|y|_S+|z|_S \leq n=|x|_S$.

Note that $y \in C$ and $|z|_S < n$. 
By the induction hypothesis we get that both $\frac{\xi(gyg^{-1})}{|y|_S} = \pi_g \big( \frac{ \xi(y)}{|y|_S} \big)$ 
and $\frac{\xi(z)}{|z|_S}$ are in $P$. 
Since $|y|_S+|z|_S \leq |x|_S$ and $0 \in P$ we get that 
$$\frac{\xi(x)}{|x|_S}=\frac{|y|_S}{|x|_S} \cdot \frac{\xi(gyg^{-1})}{|y|_S}+\frac{|z|_S}{|x|_S} \cdot \frac{\xi(z)}{|z|_S} \in P $$
by convexity.

{\bf Step II.}
Now, let $e' \in P$ be an extreme point.  
Note that $e'$ must be in $F$, \ie $e'=\frac{\xi(gwg^{-1})}{|w|_S}$ for some $g \in G$ and $w \in C$. 
Define $e = \pi_{g^{-1}}(e') = \frac{ \xi(w) }{ |w|_S }$, and note that $e$ is also an extreme point of $P$. 

Let $\phi_e\colon \mathbb{R}^d \to \mathbb{R}$ be a linear functional such that $\phi_e(p) \leq 1$ for every $p \in P$, and $\phi_e(p)=1$ if and only if $p=e$. 
Define $f\colon H \to \mathbb{R}$ by $f(x)=\phi_e(\xi(x))$. Since $\xi$ is an isomorphism and $\phi_e$ is linear, $f$ is a homomorphism. Also, 
for any $y,z \in H$, because $\frac{ \xi(z^{-1} y) }{ |z^{-1} y|_S } \in P$,
$$ f(y)-f(z) =  f(z^{-1} y) = \phi_e(\xi(z^{-1} y)) \leq |z^{-1} y|_S=d_S(y,z) , $$
and reversing the roles of $y,z$ we also have 
$f(z)-f(y) \leq d_S(y,z)$, 
so $f$ is $1$-Lipschitz.

It remains to show that there exists $x \in H$ and a positive integer $p$ 
such that $f(x^p) = |x^p|_S$ and 
if $f(y)=|y|_S$ then $y \in \langle x \rangle$.

Let $M$ be the maximal cyclic subgroup containing $\xi(w)$,
and let $x \in H$ be such that $\xi(x)$ generates $M$.
By possibly passing to $x^{-1}$ instead of $x$ we can assume without loss of generality that 
there is a positive integer $p$ such that $w=x^p$.
Specifically,  $f(x^p) = |x^p|_S$.

Now, let $y \in H$ be such that $f(y)=|y|_S$. 
So $\phi_e\big(\frac{\xi(y)}{|y|_S}\big) = 1$ implying that $\frac{\xi(y)}{|y|_S}=e=\frac{\xi(w)}{|w|_S}$. 
Let $\frac{|y|_S}{|w|_S}=\frac{a}{b}$ with $a,b$ coprime positive integers. 
Since $b\xi(y)=a\xi(w)$, since $a,b$ are coprime, and since $\xi(w) \in \Z^d$, 
we get that $\frac{1}{b}\xi(w) \in \mathbb{Z}^d$, so that $\frac1b \xi(w) \in M$ (by the maximality of $M$).
Therefore, $\xi(y) = a \cdot \frac{1}{b}\xi(w) \in M$, so that $y \in \langle x \rangle$ (recall that $\xi$ is an isomorphism).
\end{proof}

As a consequence we can prove the following, which may be of independent interest.

\begin{lem}
\label{lem:reduced finite boundary}
Let $d_S$ be a Cayley metric on a group $G$ such that $|\p_b(G,d_S)| < \infty$.
Let $K = \{ x \in G : \forall h \in \p_b(G,d_S), \ x.h=h \}$ be the kernel of the action of $G$ 
on $\p_b(G,d_S)$.

Then, for any $g,h \in \p_b(G,d_S)$ there exists $q \in \{\pm 1\}$ such that 
$g \big|_K \equiv q \cdot h \big|_K$.

As a consequence, $|g(x)-q\cdot h(x)| \leq 2 [G:K]$ for all $x \in G$.
\end{lem}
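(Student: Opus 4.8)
The plan is to translate membership in $K$ into an algebraic identity, then invoke Theorem~\ref{thm:main} to reduce the whole question to a single infinite cyclic subgroup, and finally read off both the $\pm$-statement and the quantitative bound. Since $\p_b(G,d_S)$ is finite and $G$-invariant, $K$ is exactly the kernel of the permutation representation $G\to\mathrm{Sym}\bigl(\p_b(G,d_S)\bigr)$, hence normal with $[G:K]\le|\p_b(G,d_S)|!<\infty$. Unwinding $v.h=h$ for $v\in K$ using $v.h(y)=h(v^{-1}y)-h(v^{-1})$ and $h(1)=0$ yields the crossed-homomorphism identity $h(wy)=h(w)+h(y)$ for all $w\in K$, $y\in G$. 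Specializing $y\in K$ shows each $h|_K\colon K\to\R$ is a genuine homomorphism, and since Busemann points are $1$-Lipschitz with $h(1)=0$ we have $|h(x)|\le|x|_S$ for all $x$. This step is routine.

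Next I would reduce to a cyclic subgroup. As $|\p_b(G,d_S)|<\infty$, Theorem~\ref{thm:main} shows $G$ is virtually-$\Z$, hence so is the finite-index subgroup $K$; fix $C=\langle t\rangle\cong\Z$ of finite index in $K$ (and thus in $G$). The restriction map $\mathrm{Hom}(K,\R)\to\mathrm{Hom}(C,\R)$ is injective, since a homomorphism $f\colon K\to\R$ vanishing on $C$ must vanish (for each $k\in K$ some power $k^m\in C$, so $mf(k)=f(k^m)=0$). It therefore suffices to prove $g|_C=\pm\,h|_C$ for all Busemann points $g,h$.

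The crux is to show that every Busemann point has maximal slope on $C$, namely $|h(t)|=\ell(t)$, where $\ell(t)=\lim_m|t^m|_S/m>0$ is the translation length of $t$ (positive because $\langle t\rangle$ is finite-index and hence undistorted). Let $\beta$ be a geodesic ray from $1$ with $\beta_\infty=h$, so that $h(\beta_n)=-n$ and $|\beta_n|_S=n$. Writing $\beta_n=t^{a_n}r_n$ with $r_n$ in a fixed finite transversal of $C$ in $G$, the crossed-homomorphism identity gives $-n=a_n\,h(t)+h(r_n)$ with $h(r_n)$ bounded; in particular $h(t)\ne0$ (else $-n=O(1)$), so $|a_n|=n/|h(t)|+O(1)$. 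On the other hand $|t^{a_n}|_S=|\beta_n r_n^{-1}|_S=n+O(1)\to\infty$, forcing $|a_n|\to\infty$ and $|t^{a_n}|_S/|a_n|\to\ell(t)$, whence $|a_n|=n/\ell(t)+o(n)$. Comparing leading-order asymptotics gives $|h(t)|=\ell(t)$. Thus $h|_C$ is the homomorphism $t\mapsto\pm\ell(t)$, i.e.\ $h|_C\in\{v_0,-v_0\}$ with $v_0(t)=\ell(t)\ne0$; for any two Busemann points this gives $g|_C=\pm h|_C$, and by the injectivity above $g|_K=\pm h|_K$. I expect this to be the main obstacle: it is exactly here that finiteness of the boundary (through virtual cyclicity) is indispensable, since otherwise $K$ could support transverse homomorphisms realized by Busemann points.

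Finally I would deduce the quantitative bound. Fix $q\in\{\pm1\}$ with $g|_K=q\,h|_K$ and set $\phi=g-q\,h$, so $\phi|_K\equiv0$; the crossed-homomorphism identity gives $\phi(vy)=\phi(v)+\phi(y)=\phi(y)$ for $v\in K$, so $\phi$ is constant on each of the $[G:K]$ cosets $Ky$. In the Schreier coset graph of $K$ in $G$ with respect to $S$ (which has $[G:K]$ vertices) every vertex lies within distance $[G:K]-1$ of the base vertex, so each coset $Ky$ contains a representative $z$ with $|z|_S\le[G:K]-1$. Then $|\phi(y)|=|\phi(z)|\le|g(z)|+|h(z)|\le2|z|_S\le2[G:K]$, which is the desired conclusion.
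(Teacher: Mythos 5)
Your argument contains a fatal structural flaw: it is circular within this paper. In your second step you invoke Theorem~\ref{thm:main} to conclude that $G$ is virtually-$\Z$, and everything downstream (the finite-index cyclic subgroup $C=\langle t\rangle$, the injectivity of restriction $\mathrm{Hom}(K,\R)\to\mathrm{Hom}(C,\R)$, the translation-length computation $|h(t)|=\ell(t)$) depends on that. But in the paper the logical chain runs in the opposite direction: Lemma~\ref{lem:reduced finite boundary} is used to prove Lemma~\ref{lem:bounded values}, which is used to prove Lemma~\ref{lem:N finite}, which is used to prove Theorem~\ref{thm:main}. So the present lemma must be established at a stage where nothing is known about $G$ beyond $|\p_b(G,d_S)|<\infty$; assuming the main theorem here collapses the whole proof of the paper. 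This is not a repairable detail of your write-up but the load-bearing step: without virtual cyclicity you have no candidate cyclic subgroup $C$ on which to run the asymptotic comparison, and your own remarks identify exactly this step as ``the crux.''

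The paper gets around this without any structure theorem for $G$. Your opening observation is correct and matches the paper: for $v\in K$ the identity $h(wy)=h(w)+h(y)$ holds, so $g|_K$ and $h|_K$ are homomorphisms, and they are unbounded. The paper then considers $\varphi=(g,h)\colon K\to\Z^2$ and shows the $\Q$-span of $\varphi(K)$ is one-dimensional: if it had rank $2$, then $K$ (hence, after passing to $G/[K,K]$, the group $G$) would have a quotient that is virtually-$\Z^d$ with $d\ge 2$, and combining Theorem~\ref{thm:infinite Busemann boundary} (the virtually abelian case, proved independently in Section~\ref{scn:VA groups}) with Corollary 1.11 of \cite{BT24} (finiteness of $\p_b$ passes to quotients) this forces $|\p_b(G,d_S)|=\infty$, a contradiction. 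Rank one gives $g|_K\equiv q\cdot h|_K$ for some rational $q\neq 0$; a coset-representative bound like your Schreier-graph argument gives $|g(x)-qh(x)|\le(1+|q|)[G:K]$ uniformly; and a separate geodesic estimate (evaluating $g$ along the geodesic defining $h$, then swapping $g\leftrightarrow h$ and $q\leftrightarrow 1/q$) pins down $|q|=1$. Your final quantitative step (constancy of $g-qh$ on cosets plus $1$-Lipschitzness of metric functionals) is correct and essentially identical to the paper's; it is only the middle of your proof that must be replaced by an argument of this kind.
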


\begin{proof}

When $|\partial_b(G,d_S)|<\infty$ we have that $[G:K]<\infty$.
Let $g,h \in \p_b (G,d_S)$ and define $\varphi\colon K \to \mathbb{Z}^2$ by $\varphi(x)=(g(x),h(x))$. This is a homomorphism since $g(x^{-1}y)=g(x^{-1})+x.g(y)=g(x^{-1})+g(y)$ for $x,y\in K$. It is simple to verify that both $g$ and $h$ are unbounded \cite[Proposition 2.2]{RY23}, hence $\varphi$ is a non-trivial homomorphism.

Since $[G:K] < \infty$, we know that $K$ is finitely generated. 
Let $V = \mathsf{span}\ \varphi(K)$ be the vector space over $\mathbb{Q}$ spanned by the vectors $\{ \varphi(k) : k\in K \}$.
We claim that $\dim V = 1$. 

To prove this claim, assume for a contradiction that $\dim V \geq 2$. Since $V \leq \mathbb{Q}^2$ it must be that $V =\mathbb{Q}^2$. This implies that $\varphi(K)$
is a finitely generated torsion-free Abelian group 
of rank greater than $1$.
So $K$ has a quotient isomorphic to $\mathbb{Z}^2$. Note that $[K,K]$ is normal in $G$ since $K$ is normal in $G$ and $[K,K]$ is characteristic in $K$. It follows that
$$G/[K,K]\ge_{\mathrm{f.i.}} K/[K,K] \simeq \Z^d\times T$$
with $d\ge 2$ and $T$ finite, that is, $G$ has a quotient which is virtually-$\Z^d$ for $d \geq 2$. Combining Theorem \ref{thm:infinite Busemann boundary} and Corollary 1.11 in \cite{BT24}, we get that $|\partial_b(G,d_S)|=\infty$, a contradiction!
Since $\vphi$ is a non-trivial homomorphism, we conclude that $\dim V = 1$.

Therefore, there exists $q \in \mathbb{Q}\setminus\{0\}$ such that $g\big|_K \equiv q \cdot h\big|_K$.

Let $x \in G$.  Since the diameter of the quotient Cayley graph $\Gamma(K\backslash G,S)$ is bounded by its size $[G:H]$, there exists $y\in K$ such that $d_S(x,y)\le [G:H]$. It follows that
\begin{align*}
|g(x)-q\cdot h(x)|
& = \abs{g(x)-g(y)+q\cdot h(y)-q\cdot h(x)} \\
& \leq |g(x)-g(y)|+|q| \cdot |h(y)-h(x)| \\
& \leq (1+|q|) \cdot d_S(x,y) \\
& \leq (1+|q|) \cdot [G:K]
\end{align*}
using that $g$ and $h$ are $1$-Lipschitz, and this bound holds uniformly over $x \in G$.

We are left with showing that $q \in \{\pm 1\}$. 

Let $\gamma$ and $\eta$ be geodesics such that $g=\gamma_\infty$ and $h=\eta_\infty$. For every $m,n \in \N$ we have
$$\Big|d_S(\gamma_m,\eta_n)-|\gamma_m|_S\Big| \leq |\eta_n|_S=n$$
On the other side, since $b_{\gamma_m} \to g$, there exists $m_0$ such that for all $m \geq m_0$,
$$ \Big| d_S(\gamma_m,\eta_n)-|\gamma_m|_S \Big| = | g(\eta_n)| = \abs{\big(g(\eta_n)-qh(\eta_n)\bigl) + qh(\eta_n)}= \abs{\big(g(\eta_n)-qh(\eta_n)\bigl) - qn}. $$
Putting everything together, we have
\[ \abs{\frac{g(\eta_n)-q\cdot h(\eta_n)}n-q\,} \le 1 \]
with the numerator $g(\eta_n)-qh(\eta_n)$ bounded by a constant $(1+\abs q)\cdot [G:K]$, hence the first term tends to $0$ as $n\to\infty$ and we conclude that $\abs q\le 1$.

We now repeat the argument with the roles of $g,h$ interchanged, and $\frac1q$ instead of $q$.
(Indeed, $ h\big|_K \equiv q^{-1} \cdot g\big|_K$ and therefore $|h(x) - q^{-1} g(x)| \leq (1+\tfrac{1}{|q|}) \cdot [G:K ]$
for all $x \in G$.) This yields $\frac{1}{|q|} \leq 1$ and finally $|q| = 1$, completing the proof.
\end{proof}

\section{Proof of main results}

\label{scn:main results}

In this section we prove Lemma \ref{lem:N finite} and Theorem \ref{thm:main}.

Before the formal proof of Lemma \ref{lem:N finite}, 
we provide the idea behind our proof, 
which comes from the following apparent contradiction: given two infinite geodesics $\beta,\gamma$, the previous Lemma \ref{lem:reduced finite boundary} tells us that
\begin{equation} \label{eq:unit_speed}
\forall m\ge0,\quad \gamma_\infty(\beta_m) = qm + O(1)
\end{equation}
for some $q\in\{\pm1\}$. (Take $g=\gamma_\infty$ and $h=\beta_\infty$, and evaluate at $x=\beta_m$.) Meanwhile, if the annihilator $N$ is large or even infinite, we will be able to find long finite geodesics $\alpha^{(n)}$ such that $\gamma_\infty(\alpha^{(n)}_\ell)=0$ and not $\pm\ell+O(1)$ as Equation (\ref{eq:unit_speed}) would predict. A naive idea is to consider an infinite geodesic $\beta$ which is a limit point of the finite geodesics $\alpha^{(n)}$ in the hope to derive a contradiction. However, such a limit point may well satisfy Equation (\ref{eq:unit_speed}), if the first failure to (\ref{eq:unit_speed}) happens later and later, see Figure \ref{fig:geodesics_limiting}.

\begin{center}
    \begin{tikzpicture}
        \definecolor{color0}{RGB}{216,27,96} 
        \definecolor{color1}{RGB}{255,193,7}
        \definecolor{color2}{RGB}{30,136,229}
            
        \draw[thick, rounded corners=10] (-4,1.15) rectangle (4,3.35);
        \node at (4.2,1.1) {$G$};
        \draw[color0] (0,2.25) ellipse (.3cm and 1.1cm);
        \node[color0] at (0,3.6) {$N$};
        
        \node[circle, fill=black, inner sep=1pt, label=above:$1$] at (0,1.5) {};

        \draw[color2!40, -latex] (0,1.5)    to[out=12, in=-5, looseness=10] (0,1.9)
                                        to [out=175, in=-175, looseness=15] (-.05,2.2);
        \draw[color2!60, -latex] (0,1.5)    to[out=9, in=-70, looseness=1] (2,2.25)
                                        to [out=110, in=-5, looseness=1] (.05,2.4);
        \draw[color2!80, -latex] (0,1.5)    to[out=3, in=-40, looseness=1] (3,2.2)
                                        to [out=150, in=-150, looseness=1] (3,2.4)
                                        to [out=30, in=0, looseness=1] (0,2.8);
        \draw[color2, -latex] (0,1.5) to[out=-2, in=-178] (3.8,1.4);
        \node at (-.7,2.4) {\small$\alpha^{(1)}$};
        \node at (.65,3.1) {\small$\alpha^{(3)}$};
        \node at (3.6,1.6) {\small$\beta$};
        
        \draw[thick, dotted, -latex] (0,1) -- (0,.15);
        \node at (.35,.6) {$\gamma_\infty$};
        
        \draw[very thick, -latex] (-4,0) -- (4,0);
        \node[circle, fill=black, inner sep=1pt, label=below:$0$] at (0,0) {};
        \node at (4.2,-.3) {$\Z$};
    \end{tikzpicture}
    \captionsetup{font=small, margin=30mm}
    
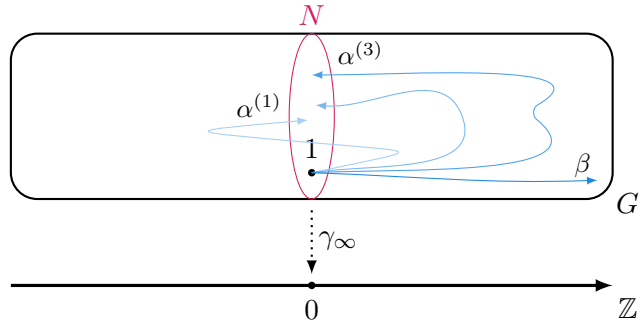
\captionof{figure}{Finite geodesics $\alpha^{(n)}$ with endpoints in $N$ limiting to an infinite geodesic $\beta$ satisfying Equation (\ref{eq:unit_speed}).}
    \label{fig:geodesics_limiting}
\end{center}
The next idea is to start from the first ``bend'' in each geodesics $\alpha^{(n)}$ where Equation (\ref{eq:unit_speed}) is not locally satisfied, translate the remaining part back to $1$ to get long finite geodesics that ``start slowly with respect to $\gamma_\infty$'' (see Figure \ref{fig:bad_start}), and then take a limit point. Producing these long finite geodesics that ``start slowly'' is the content of Lemma \ref*{lem:bounded values}.
\begin{center}
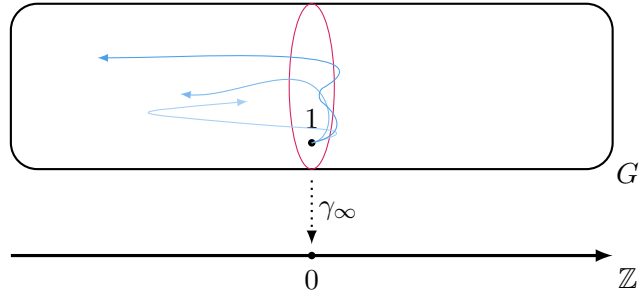

    \begin{tikzpicture}
        \definecolor{color0}{RGB}{216,27,96} 
        \definecolor{color1}{RGB}{255,193,7}
        \definecolor{color2}{RGB}{30,136,229}
        
        \draw[thick, rounded corners=10] (-4,1.15) rectangle (4,3.35);
        \node at (4.2,1.1) {$G$};
        \draw[color0] (0,2.25) ellipse (.3cm and 1.1cm);
        
        \node[circle, fill=black, inner sep=1pt, label=above:$1$] at (0,1.5) {};
                                        
        \draw[color2!40, shift={(-.8,-.15)}, -latex] (.8,1.65)    to[out=14, in=-5, looseness=2.5] (0,1.9)
                                        to [out=175, in=-175, looseness=15] (-.05,2.2);
        \draw[color2!60, shift={(-1.8,-.25)}, -latex] (1.8,1.75)    to[out=9, in=-70, looseness=1] (2,2.25)
                                        to [out=110, in=-5, looseness=1] (.05,2.4);
        \draw[color2!80, shift={(-2.85,-.17)}, -latex] (2.85,1.67)    to[out=12, in=-40, looseness=1.8] (3,2.2)
                                        to [out=150, in=-150, looseness=1] (3,2.4)
                                        to [out=30, in=0, looseness=1] (0,2.8);
        
        \draw[thick, dotted, -latex] (0,1) -- (0,.15);
        \node at (.35,.6) {$\gamma_\infty$};
        
        \draw[very thick, -latex] (-4,0) -- (4,0);
        \node[circle, fill=black, inner sep=1pt, label=below:$0$] at (0,0) {};
        \node at (4.2,-.3) {$\Z$};
    \end{tikzpicture}
    \captionsetup{font=small, margin=30mm}
    \captionof{figure}{The geodesics $\alpha^{(n)}$, with their ``bend'' translated back to $1$ so that Equation (\ref{eq:unit_speed}) fails from the start.}
    \label{fig:bad_start}
\end{center}
\begin{lem} \label{lem:bounded values}
Let $d_S$ be a Cayley metric on a group $G$ such that the set of Busemann points $\p_b (G,d_S)$ is finite. Recall that $K = \{ x \in G \ : \ \forall \ h \in \p_b(G,d_S) \ , \ x.h=h \}$ is the kernel of the action of $G$ on $\p_b(G,d_S)$, and $N = N_{\p_b (G,d_S)}$ is the annihilator.

Then, for any $m> 0$ and any $\ell$ satisfying
\[ m \le \ell \le \frac2{m+2}\sup \bigl\{ |x|_S : x \in N \bigr\}, \] 
there exists a finite geodesic $(1=\beta_0, \ldots, \beta_{\ell})$
such that $|\gamma_\infty(\beta_m) | \leq 6 [ G:K ] +1$
for any Busemann point $\gamma_\infty \in \p_b(G,d_S)$ 
\end{lem}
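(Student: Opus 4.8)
The plan is to realize $\beta$ as a translated initial segment of a geodesic joining $1$ to a well-chosen element $x \in N$ of large norm. Since $x \in N$, every Busemann point vanishes at \emph{both} endpoints of such a geodesic, whereas the ``unit speed'' estimate \eqref{eq:unit_speed} forces Busemann points to grow linearly along genuinely infinite geodesics; the segment must therefore contain a place where it progresses slowly with respect to every $\gamma_\infty$, and translating that place back to $1$ produces the desired $\beta$ (Figure \ref{fig:bad_start}). Concretely, I would first use the hypothesis to fix $x \in N$ with $L := |x|_S \ge \frac{(m+2)(\ell+m)}{2}$ (such $x$ exists because $\ell \le \frac{2}{m+2}\sup\{|y|_S:y\in N\}-m$ rearranges to $\sup\{|y|_S:y\in N\}\ge \frac{(m+2)(\ell+m)}{2}$), and let $\alpha=(1=\alpha_0,\ldots,\alpha_L=x)$ be a geodesic. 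I fix one reference Busemann point $g\in\partial_b(G,d_S)$ and set $v_j=g(\alpha_j)$ and $w_i=v_{i+m}-v_i$. Since $g$ is integer-valued and $1$-Lipschitz, each $v_j\in\Z$ and $|v_{j+1}-v_j|\le 1$; moreover $v_0=g(1)=0$ and $v_L=g(x)=0$ because $x\in N$.

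Next I would record the transfer mechanism that reduces everything to a single window difference of $g$. For the translate $\beta^{(i)}_j:=\alpha_i^{-1}\alpha_{i+j}$, which is a geodesic from $1$ of length $L-i$, the definition of the action gives $(\alpha_i^{-1}.h)(\beta^{(i)}_m)=h(\alpha_{i+m})-h(\alpha_i)$ for every Busemann point $h$. Since $\partial_b(G,d_S)$ is invariant, $\alpha_i^{-1}.h$ is again a Busemann point, so using this identity together with Lemma \ref{lem:reduced finite boundary} to compare $h$ with $\alpha_i^{-1}.h$ (at $\beta^{(i)}_m$) and $h$ with $g$ (at $\alpha_{i+m}$ and $\alpha_i$) yields
\[ |h(\beta^{(i)}_m)| \;\le\; |h(\alpha_{i+m})-h(\alpha_i)| + 2[G:K] \;\le\; |w_i| + 6[G:K] \]
for every $h\in\partial_b(G,d_S)$ (recall $[G:K]<\infty$ here). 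Thus it suffices to locate an index $i$ with $i\le L-\ell$, so that the truncation $\beta:=(\beta^{(i)}_0,\ldots,\beta^{(i)}_\ell)$ is defined, and with $|w_i|\le 1$; assembling this with the display gives exactly the claimed bound $6[G:K]+1$.

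The main work, and the step I expect to be the principal obstacle, is this last combinatorial claim, which is where the precise bound on $\ell$ is consumed. I would argue by contradiction: suppose $|w_i|\ge 2$ for all $i\in\{0,\ldots,L-\ell\}$. Since $(w_i)$ is integer-valued with $|w_{i+1}-w_i|\le 2$, it cannot contain both a value $\ge 2$ and a value $\le -2$ (a single step would have to jump by at least $4$), so it has constant sign on this range; say $w_i\ge 2$ throughout (the other case is symmetric). Telescoping over the blocks $i=0,m,2m,\ldots,Rm$ with $R=\lfloor (L-\ell)/m\rfloor$ gives $v_{(R+1)m}=\sum_{k=0}^{R}w_{km}\ge 2(R+1)$ (one checks $(R+1)m\le L-\ell+m\le L$, so this value is defined), while $v_L=0$ and the $1$-Lipschitz bound force $L-(R+1)m\ge v_{(R+1)m}\ge 2(R+1)$. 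Combining this with $(R+1)m>L-\ell$ (definition of $R$) gives $\ell>L-(R+1)m\ge 2(R+1)>2(L-\ell)/m$, hence $L<\frac{(m+2)}{2}\ell$, contradicting the choice $L\ge\frac{(m+2)(\ell+m)}{2}$.

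Therefore an index $i\le L-\ell$ with $|w_i|\le 1$ exists, and the three ingredients — the choice of a deep element of $N$, the window argument locating a ``slow start'' within the admissible range, and the two-step transfer through Lemma \ref{lem:reduced finite boundary} — combine to give the finite geodesic $\beta$ with $|\gamma_\infty(\beta_m)|\le 6[G:K]+1$ for every $\gamma_\infty\in\partial_b(G,d_S)$. The delicate point throughout is keeping the ``slow'' window inside $\{0,\ldots,L-\ell\}$ rather than near the far end of $\alpha$, which is precisely what the telescoping-plus-return-to-zero accounting guarantees under the stated upper bound on $\ell$.
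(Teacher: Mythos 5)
Your proposal is correct and takes essentially the same route as the paper's own proof: choose a deep element $x\in N$, follow a geodesic $\alpha$ from $1$ to $x$, use integrality, the $2$-Lipschitz property of the $m$-step increments, sign constancy, telescoping, and the vanishing of Busemann points at $x$ to locate an index in $\{0,\ldots,|x|_S-\ell\}$ where the increment is at most $1$, translate the tail of $\alpha$ back to $1$ there, and transfer the bound to every Busemann point by two applications of Lemma \ref{lem:reduced finite boundary}. The only differences from the paper are cosmetic (your index bookkeeping restricts the search window to $\{0,\ldots,L-\ell\}$ up front, whereas the paper searches up to $mr$ with $r=\lfloor |x|/(m+2)\rfloor+1$ and verifies afterwards that the remaining tail has length at least $\ell$).
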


\begin{proof}
Fix some integers $0\le m\le \ell$ and $x \in N$ such that $\abs x_S \ge \frac{m\ell}2+\ell$. Under these hypothesis, there exists a positive integer $r=\left\lfloor\frac{\abs x}{m+2}\right\rfloor+1$ satisfying
\[ \abs x-m(r-1)\ge \ell\ge m \quad\text{and}\quad \abs x-mr<2r.\]
Let $(1 = \alpha_0, \ldots, \alpha_{|x|} = x)$ be a finite geodesic from $1$ to $x$.

Fix a Busemann point $g \in \p_b (G,d_S)$.
Consider $\vphi(k) = g(\alpha_{k+m}) - g(\alpha_k)$ for $k=0,\ldots, |x|-m$.
It is immediate that $|\vphi(k+1)-\vphi(k)| \leq 2$ for all $k$.

Let $\eps = \min \{ |\vphi(k)| : 0 \leq k \leq m(r-1) \}$, and $\sigma(k)$ be the sign of $\vphi(k)$; \ie $\sigma(k) = \1{ \vphi(k) \geq 0 } - \1{ \vphi(k) \leq 0 }$.
Note that if $\eps \geq 2$, 
then because $\vphi$ is $2$-Lipschitz, it must be that $\sigma(k) = \sigma(0)$ for all $0 \leq k \leq m(r-1)$.
Thus, under the assumption that $\eps \geq 2$,
we have
\begin{align*}
\sigma(0) \cdot g(\alpha_{mr}) & = \sum_{j=0}^{r-1} \sigma(mj) \cdot \vphi(mj) = \sum_{j=0}^{r-1} |\vphi(mj) | 
\geq 2 r .
\end{align*}
However, since $g$ is $1$-Lipschitz, and since $x \in N$,
\begin{align*}
0 & = \abs{g\bigl(\alpha_{|x|}\bigr)}
\ge \bigl| g(\alpha_{mr}) \bigr|
- \abs{ g\bigl(\alpha_{|x|}\bigr) - g(\alpha_{mr}) } \\
& \ge 2r - d_S\bigl(\alpha_{|x|} , \alpha_{m r}\bigr) = 2 r - \bigl( \abs{x} - mr\bigr)>0 ,
\end{align*}
a contradiction! We deduce that $\eps \leq 1$, \ie there exists $0 \leq t \le m(r-1)$ such that
$$|g(\alpha_{t+m}) - g(\alpha_t) | \leq 1.$$

For this $t$, let $h = \alpha_t.g \in \p_b(G,d_S)$.
By Lemma \ref{lem:reduced finite boundary}, 
there exists $q = q(g,h) \in \{-1,1\}$ such that $|g(y) - q \cdot h(y) | \leq 2 [G:K]$ for all $y \in G$.
In particular, for all $0 \leq j,k \leq |x|-t$,
\begin{align*}
\abs{g(\alpha_t^{-1} \alpha_{t+j}) - g(\alpha_t^{-1} \alpha_{t+k}) }
& = \bigl| h(\alpha_{t+j}) - h(\alpha_{t+k}) \bigr| \\
& \le \bigl| g(\alpha_{t+j}) - g(\alpha_{t+k}) \bigr| + 4[G:K] .
\end{align*}
In particular, for $(j,k)=(0,m)$, we have that $\abs{g(\alpha_t^{-1}\alpha_{t+m})-0}\le 4[G:K]+1$.
Note that $|x| - t \geq |x| - m(r-1) \geq \ell$ so we may define a geodesic by setting
$\beta_j = \alpha_t^{-1} \alpha_{t+j}$ for all $0 \leq j \leq \ell$.

Let $\gamma_\infty \in \p_b(G,d_S)$ be any Busemann point.  Again using Lemma  \ref{lem:reduced finite boundary}, 
there exists $p=q(\gamma_\infty,g) \in \{-1,1\}$ such that $|\gamma_\infty(y)-p\cdot g(y)| \leq 2[G:K]$.
This implies that 
$$ |\gamma_\infty(\beta_m)| \leq 2 [G:K] + | g(\alpha_t^{-1} \alpha_{t+m}) | \le 6[G:K] + 1 , $$
as required.
\end{proof}

\begin{proof}[Proof of Lemma \ref{lem:N finite}]
Let $K = \{ x \in G \ : \ \forall \ h \in \p_b(G,d_S) \ , \ x.h=h \}$ be the kernel of the action of $G$ on $\p_b(G,d_S)$.
Set $M = 6[G:K] + 1$.

Fix $m>M$ (\eg $m=M+1$) and assume for a contradiction that $N$ is infinite. 
Then, by Lemma \ref{lem:bounded values},
we can choose a finite geodesic $\beta^{(\ell)} = (1=\beta_0^{(\ell)}, \ldots, \beta_\ell^{(\ell)} )$  for each $\ell\ge m$ 
such that $|\gamma_\infty(\beta_m^{(\ell)}) | \leq M$ for any Busemann point $\gamma_\infty \in \p_b(G,d_S)$.


We now take a limit point $\gamma_\infty \in \p_b (G,d_S)$ of these geodesics $(\beta^{(\ell)} )_{\ell}$, as in Lemma \ref{lem:compactness of geodesics}.
This provides us with a subsequence $(\ell_k)_k$ such that for all $k$ 
we have $\gamma_m = \beta^{(\ell_k)}_m$. 

However, since $\gamma_\infty \in \p_b(G,d_S)$, we arrive at
$$ M < m = |\gamma_\infty(\gamma_m)| = |\gamma_\infty(\beta^{(\ell_k)}_m) | \leq M , $$
a contradiction!
\end{proof}

\begin{proof}[Proof of Theorem \ref{thm:main}]
As mentioned the fact that for any virtually-$\Z$ group all Cayley graphs have only finitely many Busemann points was 
shown in \cite{TY16}. 
So we only need to show that if for some Cayley metric $|\p_b(G,d_S)| < \infty$, then $G$ is virtually-$\Z$.

So assume that $G$ is a finitely generated infinite group with a Cayley metric $d_S$ such that 
the set of Busemann points is finite $|\p_b(G,d_S) | < \infty$.
By Lemma \ref{lem:N finite}, the annihilator $N= N_{\p(G,d_S)} = N_{\p_b(G,d_S)}$ is a finite subgroup of $G$.

Write $\p_b(G,d_S) = \{ h_1, \ldots, h_n \}$ and define
$\vphi\colon G \to \Z^n$ by $\vphi(x) = (h_1(x) , \ldots, h_n(x))$.
Let $K = \{ x \in G \ : \ \forall \ h \in \p_b (G,d_S) \ , \ x.h=h \}$ be the kernel of the action of $G$ on 
the Busemann points $\p_b (G,d_S)$.
It is easy to verify that $\vphi \big|_K$ is a homomorphism with kernel $\ker \vphi \big|_K = N \cap K$.
Since $\p_b(G,d_S)$ is finite we know that $[G:K] < \infty$,
and specifically, $K$ is an infinite group.

We conclude that $K / (K \cap N)$ is isomorphic to an infinite subgroup of $\Z^n$, and specifically 
$K / (K \cap N)$ is an infinite finitely generated Abelian group.  
It is well known that the fact that $K\cap N$ is finite then implies that $K$ is virtually Abelian.

(Indeed, since $K/(K \cap N)$ is Abelian, we have that $[K,K] \lhd K \cap N$ is a finite subgroup.
Therefore, for any $x \in K$ the set $\{ y^{-1} x y  \ : \ y \in K \}$ is finite (because $y^{-1} x y = [y,x^{-1} ] x \in [K,K]x$).
This implies that $C(x) : = \{ y \in K \ : \ [x,y]=1 \}$ is of finite index in $K$.
Since $K$ is finitely generated, taking generators $x_1, \ldots, x_r$ for $K$ we have
that the center of $K$ satisfies $Z(K) = \cap_{j=1}^r C(x_j)$, which is finite index in $K$ 
as a finite intersection of finite index subgroups.)

Since $[G:K]<\infty$ and $[K:Z(K)]<\infty$ we have that $Z(K)$ is an Abelian group of finite index in $G$.
Using Theorem \ref{thm:virt Abelian} with the virtually Abelian group $G$, we have that $|\p_b(G,d_S)| < \infty$
implies that $G$ is virtually $\Z$.
\end{proof}

\bibliographystyle{abbrv}
\bibliography{finite}

\end{document}